\newcommand{\R}{\mathbb{R}}
\newcommand{\N}{\mathbb{N}}
\newcommand{\B}{\mathcal{B}}
\newtheorem{theorem}{Theorem}
\newtheorem{lemma}[theorem]{Lemma}
\newtheorem{remark}[theorem]{Remark}
\newtheorem{proposition}[theorem]{Proposition}
\newtheorem{corollary}[theorem]{Corollary}
\newtheorem{conjecture}[theorem]{Conjecture}
\author{Artur B. Saturnino}
\address{Department of Mathematics, University of Pennsylvania,
	Philadelphia, PA 19104, USA}
\email{bsatur@sas.upenn.edu}
\title[On the genus and area of CMC surfaces with bounded index]
{On the genus and area of constant mean curvature surfaces with bounded index}
\begin{document}
\maketitle

\begin{abstract}
Using the local picture of the degeneration of sequences of minimal surfaces
developed by Chodosh, Ketover and Maximo
\cite{Davi-Minimal-hypersurfaces-with-bdd-index} we show that in any closed
Riemannian 3-manifold $(M,g)$, the genus of an embedded CMC surface can be
bounded only in terms of its index and area, independently of the value of its
mean curvature. We also show that if $M$ has finite fundamental group, the genus
and area of any non-minimal embedded CMC surface can be bounded in term of 
its index and a lower bound for its mean curvature.
\end{abstract}

\section{Introduction}\label{sec: intro} 

Let $\Sigma$ be a compact immersed two-sided constant mean curvature (CMC)
surface in a Riemannian 3-manifold $(M,g)$. The \textit{strong Morse index}
of $\Sigma$ is defined as the index of the stability operator
\[Q(u,u) = \int_\Sigma |\nabla u|^2 - (|A|^2 + \text{Ric}_M(N))u^2 dvol_\Sigma\]
where $|A|$ is the norm of the second fundamental form of $\Sigma$
and $u \in C^{\infty}(\Sigma)$
\footnote{In the case where the surface is not minimal, it
is natural to consider only variations of $\Sigma$ that preserve volume to first
degree. That means adding the restriction $\int_\Sigma u = 0$ to the domain of
$Q$. The index of $Q$ in this domain is called the \textit{weak Morse index} of
$\Sigma$. Note that the difference between the weak and strong Morse indices
is at most one \cite{Barobsa-Berard-Eigenvalues-and-twisted}.
}.
We will refer to the strong Morse index of $\Sigma$ as the index of $\Sigma$.

The index is a natural variational quantity associated to CMC surfaces, hence it
is expected to be controlled in CMC surfaces produced through variational methods.
For example, it has been shown that the index
of minimal surfaces associated to the volume spectrum are controlled by the
order of the associated spectrum element \cites{zhou2019multiplicity,
li2020improved}, and analogous results are believed to hold for CMC surfaces 
produced trough the
Zhou-Zhu min-max procedure \cites{Zhou-Zhu-Min-max-theory-for-CMC,li2020improved}.

On the other hand, the geometry and topology of surfaces produced through
variational methods tend to be hard to control directly. With the objective of 
bridging this gap, we are interested in the relation between the index of CMC 
surfaces and classical geometric and topological quantities. This relation
has been well explored for minimal surfaces and hypersurfaces 
\cites{
Ros-One-Sided-comple-stable, Davi-On-the-topology-and-index-of-min-surf,
Song-Morse-index-betti-numbers-and-singular-set,
Savo-Index-bounds-for-minimal-hypersurfaces-of-the-sphere,
Ambrozio-Carlotto-Sharp-Comparing-the-Morse-index,
Davi-Minimal-hypersurfaces-with-bdd-index
},
although the following conjecture
from Marques and Neves' 2014 ICM lectures is still open:
\begin{conjecture}[{\cite{Neves-New-applications-of-min-max}}]
If the ambient manifold has
positive Ricci curvature, then an index $I$ embedded orientable compact minimal
hypersurface has first Betti number bounded by a fixed multiple of $I$.
\end{conjecture}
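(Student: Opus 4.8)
This is a well-known open problem; the strategy below is the one the methods of this paper point toward. The plan is to argue by contradiction using the degeneration picture of Chodosh, Ketover and Maximo \cite{Davi-Minimal-hypersurfaces-with-bdd-index}. Suppose the statement fails: then there is a closed orientable $3$-manifold $(M,g)$ with $\mathrm{Ric}_M>0$ and a sequence $\Sigma_k$ of closed embedded orientable minimal surfaces with $\mathrm{index}(\Sigma_k)\le I$ but $b_1(\Sigma_k)\to\infty$. Since $\Sigma_k$ is a closed orientable surface and, by Frankel's theorem, connected, $b_1(\Sigma_k)=2\,\mathrm{genus}(\Sigma_k)$, so the genus diverges along the sequence.

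First I would rule out area blow-up, i.e.\ show $\sup_k\mathrm{Area}(\Sigma_k)<\infty$. If not, after passing to a subsequence the $\Sigma_k$ converge, smoothly and with finite multiplicity away from a finite set $\mathcal{S}$ with $\#\mathcal{S}\le I$ of curvature-concentration points, to a minimal lamination of $M\setminus\mathcal{S}$; unbounded area forces the multiplicity to be at least two along some leaf. The standard logarithmic-cutoff / positive-Jacobi-field argument then shows that leaf is stable, and after removable-singularity results for minimal laminations it gives a closed two-sided stable minimal surface $\Sigma$ in $M$. But the stability inequality applied to $\phi\equiv 1$ yields $\int_\Sigma(|A|^2+\mathrm{Ric}_M(N))\le 0$, which is impossible when $\mathrm{Ric}_M>0$. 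Hence the area stays bounded.

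With area and index both bounded, the Chodosh--Ketover--Maximo structure theorem applies directly: away from at most $CI$ concentration points, $\Sigma_k$ is locally graphical and converges smoothly with bounded multiplicity to a smooth closed minimal surface $\Sigma_\infty$, while suitable rescalings about each concentration point converge to a complete embedded minimal surface in $\R^3$ of finite index, hence of finite total curvature, hence of bounded genus and boundedly many ends. Gluing these local pictures, $\mathrm{genus}(\Sigma_k)$ equals, up to a bounded additive error, $\mathrm{genus}(\Sigma_\infty)$ plus the genera absorbed at the finitely many concentration points; since each of these is finite, $b_1(\Sigma_k)$ is bounded --- the desired contradiction. This already proves a bound of the form $b_1\le f(I)$, and in fact is essentially the mechanism behind the index-plus-area bounds of the present paper.

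The hard part is to make $f$ \emph{linear}. Two quantitative inputs are missing from the scheme above: a universal bound (independent of the ambient manifold) on the genus absorbed at a single concentration point in terms of the index it carries, and a bound on $\mathrm{genus}(\Sigma_\infty)$ that is linear in $I$. The former reduces to a sharp relation between index and genus-plus-ends for finite-total-curvature minimal surfaces in $\R^3$ and their higher-multiplicity limits; the latter is where positive Ricci curvature must be exploited more forcefully than in the compactness step above --- for instance through Frankel's theorem together with the non-existence of stable leaves --- to prevent a fixed minimal surface from carrying topology disproportionate to its index. I expect this uniform genus-versus-index bookkeeping to be the main obstacle, and it is precisely the gap between the index-and-area bounds available here and the conjecture.
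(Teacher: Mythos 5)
The statement you were asked about is the Marques--Neves conjecture, which the paper explicitly records as \emph{open}; there is no proof in the paper to compare against, and your proposal, as you yourself say, is a strategy outline rather than a proof. You were right to flag this, and your diagnosis of where the difficulty lies is essentially correct: the compactness scheme you describe (bounded index forces curvature concentration at finitely many points, unbounded area would produce a two-sided stable leaf, which positive Ricci forbids via the stability inequality with $\phi\equiv 1$, and then the Chodosh--Ketover--Maximo degeneration picture bounds the genus) does yield a bound of the form $b_1(\Sigma)\le f(I,M,g)$ in dimension three, which is the known result, but it is obtained by contradiction and gives no control on the shape of $f$, let alone linearity with a constant independent of $(M,g)$.

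Two gaps deserve to be named more sharply than you do. First, the conjecture concerns minimal \emph{hypersurfaces} in ambient manifolds of arbitrary dimension, whereas every tool in your sketch (the pointwise curvature estimate of Theorem \ref{thm: curvature bounds}, the blow-up to complete finite-index minimal surfaces in $\R^3$ with finite total curvature, the lamination compactness) is specific to dimension three; in higher dimensions even the qualitative compactness statement is substantially harder and the finite-total-curvature classification is unavailable. Second, even in dimension three, the step ``gluing these local pictures, the genus equals the limit genus plus the genera absorbed at concentration points up to bounded error'' hides exactly the quantitative bookkeeping the conjecture demands: one needs a genus-versus-index inequality for the blow-up limits that is uniform and linear, and a reason the smooth limit $\Sigma_\infty$ cannot carry genus disproportionate to its index. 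Neither is supplied by the compactness argument. So your proposal correctly reproduces the state of the art but does not, and could not with these methods alone, prove the statement.
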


The relation between the index and the geometry of CMC surfaces and
hypersurfaces is well known in some special cases (e.g.
\cites{
Cavalcante-Lower-bound-for-the-index,
Rossman-Morse-index-of-cmc-tori,
Hong-index-estimates-for-cmc
}).
Here we will study the area and genus of CMC surfaces of bounded index by
describing the degeneration of sequences of such surfaces
in a similar way as Chodosh, Ketover and Maximo have done for
minimal surfaces \cite{Davi-Minimal-hypersurfaces-with-bdd-index}. We will show
that in any closed 3-manifold, the genus of a CMC surface is controlled by
its index and area independently of the value of its mean curvature:

\begin{theorem}\label{thm: topology bound}
Let $I, A_0 < \infty$ and suppose $(M,g)$ is a closed 3-dimensional Riemannian
manifold. There is a constant $C < \infty$ depending only on $(M,g), I$
and $A_0$ such that any closed, connected, embedded CMC surface in $(M,g)$ with area
at most $A_0$ and index at most $I$ has genus at most $C$.
\end{theorem}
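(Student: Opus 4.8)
The plan is to argue by contradiction and to import into the CMC setting the degeneration analysis of Chodosh--Ketover--Maximo \cite{Davi-Minimal-hypersurfaces-with-bdd-index}, the crucial new difficulty being that the mean curvatures of the surfaces in our sequence are not assumed bounded. So suppose the theorem fails: there is a sequence of closed, connected, embedded CMC surfaces $\Sigma_k \subset (M,g)$ with $\mathrm{Area}(\Sigma_k) \le A_0$, index at most $I$, and $\mathrm{genus}(\Sigma_k) \to \infty$; write $H_k \ge 0$ for the mean curvature of $\Sigma_k$. Passing to a subsequence we may assume either $H_k \to H_\infty < \infty$ or $H_k \to \infty$. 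In the first regime the analysis is essentially that of the minimal case: curvature estimates for embedded CMC surfaces of bounded index produce a finite set of concentration points $\mathcal S \subset M$ with $\#\mathcal S \le N(I)$ such that, away from $\mathcal S$, the $\Sigma_k$ have locally uniformly bounded second fundamental form and subsequentially converge, with locally bounded integer multiplicity, to a CMC lamination $\mathcal L$ of $M \setminus \mathcal S$ with constant $H_\infty$, while near each $p \in \mathcal S$ suitable dilations of $\Sigma_k$ converge to a complete, properly embedded \emph{minimal} surface in $\R^3$ of finite index (the rescaled mean curvatures tend to $0$). In the second regime, since $|A_{\Sigma_k}| \ge c\,H_k \to \infty$ everywhere, I would first dilate the ambient metric by $H_k$; this flattens $(M,g)$ at the relevant scales, turns $\Sigma_k$ into a surface of mean curvature $1$, and the same point-selection and bounded-index curvature estimates now yield a finite concentration set together with complete, properly embedded limit surfaces in $\R^3$ that are CMC with $H = 1$ and of finite index.

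Next I would bound the genus of each piece. A complete properly embedded minimal surface in $\R^3$ of index at most $I$ has finite total curvature by Fischer-Colbrie, hence finite genus and finitely many ends, with genus bounded in terms of $I$ (via the index/total-curvature estimates of Chodosh--Maximo); a complete properly embedded CMC surface with $H=1$ in $\R^3$ of finite index is either compact -- hence, by Alexandrov, a round sphere -- or has finite topology with genus controlled by the index, by the structure theory of finite-index CMC ends. Thus every blow-up limit $\Sigma_\infty^{p}$ satisfies $\mathrm{genus}(\Sigma_\infty^{p}) \le g_0(I)$. For the limit lamination $\mathcal L$, each leaf is an embedded CMC surface in $M$ with locally bounded $|A|$, and since all index concentration has been absorbed into $\mathcal S$, a Gauss--Bonnet estimate together with the monotonicity formula (the bound $A_0$ controls area density, hence the number of sheets, while $\int_{\Sigma_k}|K| \le \tfrac12\int_{\Sigma_k}|A_{\Sigma_k}|^2 + C(g)\,\mathrm{Area} \le C$ away from $\mathcal S$) bounds the total genus of $\overline{\mathcal L}$ and the number of its components uniformly.

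Finally I would combine these via a topological counting argument. Decompose each $\Sigma_k$ (for $k$ large) into: the parts inside small fixed balls around the points of $\mathcal S$, which after the dilations are close to the finite-topology limits $\Sigma_\infty^{p}$; the neck regions joining the finitely many graphical sheets over the leaves of $\mathcal L$, whose number is bounded by the multiplicity and $\#\mathcal S$; and the region where $\Sigma_k$ is a bounded number of graphs over $\mathcal L$. Additivity of the Euler characteristic over this decomposition -- the same bookkeeping used by Colding--Minicozzi and by Chodosh--Ketover--Maximo to show that genus is neither created nor lost beyond what the necks and concentration points allow -- then gives $\mathrm{genus}(\Sigma_k) \le C(I,A_0,(M,g))$ for all large $k$, contradicting $\mathrm{genus}(\Sigma_k)\to\infty$.

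The main obstacle is the second regime, $H_k \to \infty$: one must run the Schoen-type curvature estimates and the blow-up/point-selection scheme for embedded CMC surfaces with no a priori control on $H$, and one needs a uniform finite-topology statement for complete embedded finite-index CMC surfaces in $\R^3$; it is here that finiteness of the index is genuinely used, each concentration point consuming a definite amount of index through a logarithmic cut-off test function, which is what bounds $\#\mathcal S$ and prevents unbounded genus from accumulating at a single scale. A secondary technical point is that, when $H_k$ is only bounded, the limiting value $H_\infty$ is arbitrary, so one cannot quote minimal-surface compactness directly and must carry the CMC lamination and its Gauss--Bonnet estimate through the whole argument.
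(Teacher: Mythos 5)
Your Case 1 (bounded $H_k$) matches the paper's argument in substance: blow-up set of at most $I$ points, minimal blow-up limits there, lamination limit away from them with multiplicity controlled by the area bound, and a Mayer--Vietoris/Euler-characteristic bookkeeping at the end. The genuine gap is in the regime $H_k \to \infty$. After you dilate by $H_k$ the area bound is destroyed: the rescaled surfaces $H_k\tilde\Sigma_k$ have area $\approx H_k^2\,\mathrm{Area}(\Sigma_k)$, which is unbounded, so your statement that ``the bound $A_0$ controls area density, hence the number of sheets'' is simply unavailable at the new scale. Without a uniform bound on the number of graphical sheets away from the concentration set you cannot obtain finite-multiplicity convergence of the rescaled surfaces, cannot bound the number of components of the pieces in the decomposition, and the final Euler-characteristic count does not close. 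The paper replaces the area argument at this point by a completely different mechanism (Section 4 and Appendix B): the rescaled surfaces live, thanks to the diameter bound of Corollary \ref{cor: diam bound}, inside a single ball of $\R^3$, which is simply connected, so consecutive graphical sheets must have mean curvature vectors pointing in alternating directions (Proposition \ref{prop: alternating surfaces}); combined with the lower bound $H=1$ and a Harnack inequality for the difference of two graphs, alternating sheets cannot be too close (Proposition \ref{prop: unif L-infy bound}), which caps the number of sheets (Proposition \ref{prop: finite multiplicity}). You would need to supply this, or an equivalent substitute, for your second regime to work. Note also that the diameter bound is what legitimizes ``dilating the ambient metric'': it shows the whole surface collapses to a point and hence fits into one exponential chart that becomes Euclidean after rescaling.

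A secondary inaccuracy: in the $H_k\to\infty$ regime the further blow-ups at the concentration points are again \emph{minimal} laminations, not $H=1$ surfaces, because there one rescales by $|A|(p_k)$, which diverges relative to the (already normalized) mean curvature. So the classification of complete properly embedded finite-index $H=1$ surfaces in $\R^3$ that you invoke is not needed, and it is fortunate that it is not, since no such clean genus-versus-index statement is available to quote; the $H=1$ limit only appears away from the concentration points, where curvature is bounded and no topology concentrates.
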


If $M$ is spherical, that is, $M$ is closed with finite fundamental group,
we will show that every sequence of closed connected CMC surfaces embedded in
$M$ with an uniform lower mean curvature bound and an uniform upper index bound
has a subsequence that does not accumulate away from a finite set or points.
Using this fact we are able to show:

\begin{theorem}\label{thm: area bound}
Let $I < \infty, \eta > 0$ and suppose $(M,g)$ is a spherical 3-dimensional Riemannian
manifold. There are constants $B, C < \infty$ depending only on $(M,g), I$ and 
$\eta$ such that any closed connected embedded CMC surface in $(M,g)$
with index at most $I$ and mean curvature at least $\eta$ has area at most $B$
and genus at most $C$.
\end{theorem}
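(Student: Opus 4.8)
The plan is to establish the area bound first and then read off the genus bound from Theorem~\ref{thm: topology bound}, so that $B$ depends only on $(M,g), I, \eta$ and $C = C((M,g),I,B)$. Arguing by contradiction, suppose there is a sequence $\Sigma_k$ of closed, connected, embedded CMC surfaces in $(M,g)$ with index at most $I$, mean curvature $H_k \geq \eta$, and area tending to $\infty$; after passing to a subsequence, $H_k \to H_\infty \in [\eta, +\infty]$.

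The substantive ingredient is the structural statement announced in the introduction: after a further subsequence, the $\Sigma_k$ do not accumulate away from a finite set $\mathcal{Y} \subset M$ with $\#\mathcal{Y} \leq N(I)$. I would prove this along the lines of the local picture of Chodosh, Ketover and Maximo \cite{Davi-Minimal-hypersurfaces-with-bdd-index}. The index bound localizes the loss of control to at most $I$ points, which form $\mathcal{Y}$: away from $\mathcal{Y}$, and at scales below the injectivity radius and (when $H_\infty < \infty$) below $1/H_\infty$, the surfaces are stable, so the curvature estimates for stable CMC surfaces bound $|A_{\Sigma_k}|$ uniformly on compact subsets of $M \setminus \mathcal{Y}$; hence a subsequence converges smoothly on $M \setminus \mathcal{Y}$ to a CMC lamination $\mathcal{L}$ with mean curvature $H_\infty$. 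The hypothesis that $\pi_1(M)$ is finite enters precisely here, to rule out that $\mathcal{L}$ carries unbounded area and that the convergence has unbounded multiplicity --- both of which can happen for minimal surfaces in general $3$-manifolds. For the multiplicity I would combine finiteness of $\pi_1$ with the maximum principle: two sheets of a single connected embedded CMC surface collapsing onto a common leaf would have to be joined, through the bad set, by a neck whose flaring shape yields in the limit two CMC-$H_\infty$ graphs on opposite sides of a thin region with mean curvature vectors pointing into it, a configuration forbidden by the Hopf maximum principle when $H_\infty > 0$. It follows that for each fixed small $r_0 > 0$ the area of $\Sigma_k$ outside $\bigcup_{y \in \mathcal{Y}} B_{r_0}(y)$ stays bounded, so if the total area diverges then all of it concentrates at the points of $\mathcal{Y}$.

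It then remains to bound the area near each $y \in \mathcal{Y}$, by a blow-up analysis in which the index bound is spent a second time. Choosing $p_k \to y$ and $\lambda_k \to 0$ comparable to the inverse of the maximal curvature of $\Sigma_k$ in $B_{r_0}(y)$ --- or, in the case $H_k \to \infty$, comparable to $1/H_k$ --- and rescaling $(M, \lambda_k^{-2} g, p_k)$, the rescaled surfaces have bounded second fundamental form, nonzero curvature at the origin, index at most $I$, and rescaled mean curvature $H_k \lambda_k$ tending to $0$ or to a positive constant; a subsequence converges to a complete, non-flat surface $S_\infty \subset \R^3$ which is minimal or has constant mean curvature, and has finite index. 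By the structure theory of such surfaces in $\R^3$, $S_\infty$ has finitely many ends (catenoidal or planar if minimal, Delaunay if CMC with $H \neq 0$), so any piece of $S_\infty$ of fixed extrinsic size has bounded area; unrescaling, and running the standard finite bubbling induction --- which terminates after at most $C(I)$ steps because every non-flat bubble costs at least one unit of index, and which again uses the maximum principle to bound the multiplicities at the intermediate scales --- bounds the area of $\Sigma_k$ in $B_{r_0}(y)$ by a constant depending only on $(M,g), I, \eta$. Summing over the at most $N(I)$ points of $\mathcal{Y}$ and adding the bounded contribution from the complement contradicts the assumption that the area diverges, proving the area bound $B$; Theorem~\ref{thm: topology bound} then gives the genus bound $C$.

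I expect the main obstacle to be the structural statement of the second paragraph. Smooth compactness away from $\mathcal{Y}$ is routine, but identifying the limit lamination and bounding the multiplicity of the convergence is not, and it is here that the positivity of the mean curvature must be exploited together with the finiteness of $\pi_1$: for $H \equiv 0$ a bounded index does not bound the multiplicity, and it is precisely the maximum principle for CMC surfaces --- made quantitative against the number of necks threading the finitely many bad points --- that must replace this. By comparison, once one invokes the classification of complete finite-index minimal and CMC surfaces of $\R^3$, the blow-up of the third paragraph is largely bookkeeping; the case $H_k \to \infty$ is in fact the easiest, since then $\mathcal{L}$ is empty, every bubble is a round sphere of radius comparable to $1/H_k$, and the total area tends to $0$.
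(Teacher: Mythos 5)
Your overall architecture matches the paper's: prove the area bound first and get the genus bound from Theorem \ref{thm: topology bound}; split according to whether $H_n$ stays bounded or diverges; in the divergent case rescale by $H_n$ (the paper gets the shrinking-to-a-point statement from Corollary \ref{cor: diam bound}); in the bounded case decompose near and away from the blow-up points and bound the number of sheets away from them. You also correctly identify the crux --- bounding the multiplicity away from the blow-up points using finiteness of $\pi_1(M)$ --- and you correctly identify the forbidden local configuration (two nearby CMC graphs whose mean curvature vectors point with opposite co-orientations, which is Proposition \ref{prop: unif L-infy bound} plus the Harnack step of Proposition \ref{prop: finite multiplicity}). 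But the mechanism you propose for producing that configuration is where the gap lies.

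Your argument for why two collapsing sheets must have opposite co-orientations is that, being parts of one connected embedded surface, they ``have to be joined, through the bad set, by a neck.'' This is not how finiteness of $\pi_1$ enters, and as stated it does not work: two sheets of a connected embedded CMC surface accumulating on a common leaf need not be joined by a neck in the thin region between them --- they can be joined through the global topology of $M$, far from the bad set, in which case nothing local forces their normals to oppose. (Many consecutive sheets all co-oriented the same way is exactly the scenario one must exclude, and connectedness alone does not exclude it; note also that your neck picture makes no use of $|\pi_1(M)|<\infty$, whereas the statement you need is false without some such hypothesis on how sheets can recur.) The paper's replacement is Proposition \ref{prop: alternating surfaces}: pass to the universal cover, which is a finite cover of order $\ell=|\pi_1(M)|$, so the lift of $\Sigma$ has at most $\ell$ components; among any $\ell+1$ consecutive intersections of a transversal arc with $\Sigma$, two lie on the same lifted component, and a closed-loop intersection-parity argument in the simply connected cover (using that the lifted component is complete and properly embedded, hence separating for mod-2 intersection) forces the signs of $g(\gamma',N)$ at those two crossings to differ. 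This ``$\ell$-alternating'' property is the idea missing from your proposal; once you have it, Propositions \ref{prop: finite multiplicity} and \ref{prop: unif L-infy bound} convert it into the uniform sheet bound exactly as you envisage. A secondary remark: in the $H_k\to\infty$ case you do not need (and should not rely on) a classification of the bubbles as round spheres --- the paper only needs the diameter bound of Corollary \ref{cor: diam bound} together with the sheet count of Lemma \ref{lemma: local picture away from blow-up} to see that the rescaled surfaces have bounded area, hence the original ones have area tending to $0$.
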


The compactness assumption in Theorem \ref{thm: area bound} is necessary
\cite{Baris-H-surfaces-with-arbitrary-topology}, as is the lower bound on the
mean curvature \cite{Colding-Minicozzi-Examples-of-embeded-min-tori}. However it is 
not clear if the assumption on the fundamental group of $M$ is essential.

We will always assume the surfaces are connected and two-sided. Note that if
$\Sigma$ is a non-minimal CMC surface we can orient its normal bundle
by the mean curvature vector. Then the scalar value of the mean curvature
$H = \text{tr}(A)$ of $\Sigma$ is positive, we will often refer to $H$ as the 
mean curvature of $\Sigma$. In the case $\Sigma$ is minimal and one-sided,
the proof of Theorem \ref{thm: topology bound} is given in
\cite{Davi-Minimal-hypersurfaces-with-bdd-index}. Now we will now present the main ideas
in the proof of theorems \ref{thm: topology bound} and \ref{thm: area bound}.

\subsection{Outline of the proofs of the main theorems}
Theorems \ref{thm: topology bound} and \ref{thm: area bound} are shown by studying 
the degeneration of sequences of closed
embedded CMC surfaces, possibly with varying mean curvature, $\{\Sigma_n\}$ 
in $(M,g)$ with index at most $I$. Assume by contradiction that the genus of
the $\Sigma_n$ form a divergent sequence, and assume for simplicity that, passing
to a subsequence, the $\Sigma_n$ have uniformly bounded mean curvature (if this
is not the case an extra rescaling is needed).

By Theorem \ref{thm: curvature bounds}, we can pass $\{\Sigma_n\}$ to a
subsequence with uniformly bounded second fundamental form away from a
set of at most $I$ points in $M$, called blow-up points. Near the blow-up 
points we follow the arguments of Chodosh, Ketover and Maximo 
\cite{Davi-Minimal-hypersurfaces-with-bdd-index} to show that a subsequence of 
rescaled $\Sigma_n$ must converge to a minimal surface in $\R^3$ with index
at most $I$. This fact allows us to bound the genus and area of the $\Sigma_n$ 
near the blow-up points.

Away from the blow-up points the $\Sigma_n$ must subconverge to a weak CMC 
lamination of $M$
\footnote{
Even though we will not use this fact, it is interesting to note that by Theorem
\ref{thm: curvature bounds} and the Local Removable Singularity Theorem for CMC
laminations \cite{Meeks-Perez-Ros-Classificaion-of-CMC-foliations}*{Theorem 1.2}
we can choose a subsequence so this limit lamination extends to the blow-up
points.
}.
If the leaves of this lamination are proper we can use the limit lamination to
bound the genus of the $\Sigma_n$ away from the blow-up points. In order to show
that the leaves of the limit lamination are proper we bound the number of sheets
any $\Sigma_n$ can have in any small region away from the blow-up points. At this 
point the proof of theorems \ref{thm: topology bound} and \ref{thm: area bound} diverge.

In the case of Theorem \ref{thm: topology bound} we have uniform area bound for
the $\Sigma_n$, so the bound of the number of sheets follows from a
standard argument. To prove Theorem \ref{thm: area bound} we will
show that if $M$ is spherical, the direction of the mean curvature vectors of
these sheets must alternate in a certain sense. This, together with an uniform lower 
bound on the mean curvature of the $\Sigma_n$ will give uniform bounds on the 
maximum number of sheets of any $\Sigma_n$ that can pass through a small region away
from the blow-up points.

\subsection{Outline of the paper}
In Section \ref{sec: curv bounds} we will use blow-up arguments to
show curvature bounds for CMC surfaces with bounded index. Section 
\ref{sec: local picture near blow-up pts} is dedicated to adapting results from 
Chodosh, Ketover and Maximo \cite{Davi-Minimal-hypersurfaces-with-bdd-index} to
CMC surfaces. In Section \ref{sec: picture away from blow-up} we show upper bounds
on the number of sheets of a CMC surface that pass through a small region of a 
spherical 3-manifold. Finally, Section \ref{sec: pf of main theorems} contains
the proof of the main theorems.
Appendices \ref{app: uniform graph} and \ref{app: alternating CMC graphs}
contain details about the local parametrization of CMC surfaces as graphs.

\subsection{Acknowledgments} 
I would like to thank my advisor Davi Maximo for 
many insightful comments and conversations. I would also like to
thank the reviewer for carefully reading the manuscript, pointing
out typos and making observations that led to corrections to
Proposition \ref{prop: alternating surfaces} and Lemma 
\ref{lemma: local picture away from blow-up}.

\section{Curvature bounds for CMC surfaces with finite index}
\label{sec: curv bounds}

The following scale-invariant inequality is a generalization of 
a classical inequality first discovered by Schoen 
\cite{Schoen-Estimates-for-stable-min-surf} (c.f. 
\citelist{
\cite{Davi-Minimal-hypersurfaces-with-bdd-index}*{Lemma 2.2}
\cite{Rosenberg-General-Curvature-Estimates}*{Main Theorem}
}).
We will prove the theorem in full generality since we believe it may be
useful in more contexts.
\begin{theorem}\label{thm: curvature bounds}
For all $I < \infty$ there is a constant $C < \infty$ depending only on $I$
such that the following holds:
Let $(M,g)$ be a complete Riemannian 3-manifold with absolute 
sectional curvature bounds $|K| \le \Lambda < \infty$ for some $\Lambda > 0$ and
suppose $\Sigma \to M$ is an immersed CMC surface with (strong)
index at most $I$. Then there is a set $\B \subset \Sigma$ with at most
$I$ points such that for all $p \in \Sigma$
\begin{equation}\label{eq: general curvature bound}
	|A|(p)\min\left\{d_\Sigma(p, \partial \Sigma \cup \B),
	(\sqrt{\Lambda})^{-1}\right\} \le C. 
\end{equation}
\end{theorem}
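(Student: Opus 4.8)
The plan is to argue by contradiction using a point-picking (blow-up) argument, in the spirit of Schoen's estimate and its extension in \cite{Davi-Minimal-hypersurfaces-with-bdd-index}*{Lemma 2.2}, where the set $\B$ arises as the collection of points where curvature concentrates in the limit. Suppose the theorem fails for some fixed $I$: then there is a sequence of 3-manifolds $(M_k, g_k)$ with $|K| \le \Lambda_k$, immersed CMC surfaces $\Sigma_k \to M_k$ of index at most $I$, and points $p_k \in \Sigma_k$ such that, writing $f_k(p) = |A_{\Sigma_k}|(p)\min\{d_{\Sigma_k}(p, \partial\Sigma_k \cup \B), (\sqrt{\Lambda_k})^{-1}\}$ for any candidate bad set $\B$ (we will in fact build $\B$ along the way), the quantity in \eqref{eq: general curvature bound} exceeds $k$. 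The first step is to reduce to a local statement: by rescaling the metric $g_k$ so that $\Lambda_k = 1$ (this rescales $|A|$ and all distances compatibly, so the left side of \eqref{eq: general curvature bound} is unchanged), we may assume $|K| \le 1$ throughout; the conclusion we seek is then $|A|(p)\min\{d_\Sigma(p,\partial\Sigma\cup\B),1\} \le C$.

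Next I would run the standard iterated point-selection. Given the first failure, pick $q_k^{(1)} \in \Sigma_k$ realizing (or nearly realizing) the maximum of $|A_{\Sigma_k}|(q)\, d_{\Sigma_k}(q, \partial\Sigma_k)$ over a suitable compact region; rescale $\Sigma_k$ by $\lambda_k^{(1)} = |A_{\Sigma_k}|(q_k^{(1)})$. Because the ambient sectional curvature is bounded by $1$ and the rescaling sends the curvature bound to $\lambda_k^{(1)-2} \to 0$, the rescaled ambient metrics converge to the flat $\R^3$, and the standard choice of the blow-up point guarantees the rescaled surfaces have $|A| \le 1 + o(1)$ on balls of radius $\to \infty$ around the marked point, with $|A| = 1$ at the center. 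The mean curvature of the rescaled surface is $H_k/\lambda_k^{(1)}$; since $H_k$ is bounded on the relevant region once curvature is controlled (or, if not, one notes $H_k \le 2|A_{\Sigma_k}| \le 2\lambda_k^{(1)}$ pointwise at the max, so $H_k/\lambda_k^{(1)} \le 2$ and a further subsequence makes it converge), the limit is an immersed surface in $\R^3$ of constant mean curvature $H_\infty \ge 0$, nontrivial since $|A|(0) = 1$, and of index at most $I$ by lower semicontinuity of the index under smooth convergence. The key point is that a \emph{complete} CMC surface in $\R^3$ with $|A| \le 1$ everywhere and finite index must be compact away from finitely many ends controlled by $I$ — more precisely, such a surface has finite total curvature (by the usual logarithmic cutoff argument showing that outside a compact set it is stable, hence by the Pogorelov–do Carmo–Peng / Fischer-Colbrie–Schoen type classification it is either a plane, or, in the CMC case with $H_\infty > 0$, the bounded geometry forces a contradiction with completeness, since there is no complete stable non-minimal CMC surface in $\R^3$ — a round sphere has index one but is not stable, and a cylinder is unstable). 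The upshot is that after removing at most $I$ points where curvature concentrates, we get the desired uniform bound; if a single blow-up does not suffice (curvature still concentrates at a second scale near a different point) we repeat, and the index drop at each blow-up — each concentration point carries a nonzero contribution to the index of the limit — bounds the number of iterations by $I$, producing the set $\B$ with $|\B| \le I$.

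The main obstacle, and where the argument requires the most care, is the last step: controlling the number of blow-up points by the index and ruling out a complete, noncompact, bounded-curvature CMC limit that is stable. For minimal surfaces this is Fischer-Colbrie's theorem (finite index implies finite total curvature) together with the Pogorelov/do Carmo–Peng result that the only complete stable minimal surface in $\R^3$ is the plane, which has $|A| \equiv 0$ and hence cannot be a nontrivial blow-up; for $H_\infty > 0$ one must invoke that there is no complete stable immersed CMC surface in $\R^3$ (this follows from, e.g., the work on stability of CMC hypersurfaces, using that the operator $\Delta + |A|^2 + \mathrm{Ric}$ has no nonnegative bottom eigenvalue on such a surface with the volume-preserving constraint relaxed, since we are using the strong index). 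The bookkeeping that each successive blow-up strictly decreases the available index — which one implements by choosing the blow-up points a definite rescaled distance apart so that the limit eigenfunctions have disjoint supports — is routine but must be stated carefully; I would phrase it exactly as in \cite{Davi-Minimal-hypersurfaces-with-bdd-index}, adapting only the identification of the limit as CMC rather than minimal, which as noted above costs nothing because a nonzero-$H$ limit is immediately excluded by nonexistence of complete stable CMC surfaces in $\R^3$.
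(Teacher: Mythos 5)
Your proposal is correct and follows essentially the same route as the paper: a contradiction argument with Schoen-type point selection, blow-up to a complete non-flat CMC surface in $\R^3$ with bounded second fundamental form, nonexistence of complete stable such surfaces forcing each blow-up region to carry index at least one, and induction on $I$ to produce $\B$ with $|\B| \le I$. The only points the paper makes explicit that you elide are the use of harmonic coordinates (needed because the ambient manifolds vary and only sectional curvature bounds are assumed) and the two-case verification that the selected point can be adjoined to the inductively obtained bad set; your detour through finite total curvature of the limit is unnecessary, since non-stability of the complete non-flat limit is all that is used.
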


This Theorem will be shown in \ref{subsec: pf of curv bound} after we introduce
the language we will use to prove it. Note that since 
the constant $C$ does not depend on the mean curvature $H$ of
$\Sigma$ and $|A| \ge \sqrt 2 H$ we can conclude from \eqref{eq: general curvature
bound} that if $\frac{\sqrt 2H}{\sqrt \Lambda}\ge C$
we must have $d_\Sigma(p, \partial \Sigma \cup \B) \le \frac{C}{\sqrt 2H}$ for all $p \in
\Sigma$. So it follows:

\begin{corollary}\label{cor: diam bound}
Take $M, \Lambda, \Sigma, I$ an $C$ as in Theorem \ref{thm: curvature bounds}
and let $H$ be the mean curvature of $\Sigma$. If 
$\frac{\sqrt 2 H}{\sqrt \Lambda} \ge C$ we can conclude that for all $p \in
\Sigma$:
\begin{equation}\label{eq: diameter bound boundary case}
	d_\Sigma(p, \partial \Sigma) \le \left(I + \frac{1}{2}\right)\frac{\sqrt 2 C}{H}
\end{equation}
if $\Sigma$ has a boundary, and
\begin{equation}\label{eq: diameter bound closed case}
	\textup{diam}_\Sigma(\Sigma) \le \frac{\sqrt 2 IC}{H}
\end{equation}
if $\Sigma$ is closed.
\end{corollary}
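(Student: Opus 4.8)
The plan is to deduce the Corollary from Theorem \ref{thm: curvature bounds} by a short metric chaining argument. First I would apply Theorem \ref{thm: curvature bounds} to get a set $\mathcal B\subset\Sigma$ with $|\mathcal B|\le I$ for which \eqref{eq: general curvature bound} holds. Since $|A|\ge\sqrt2\,H$ on a CMC surface, \eqref{eq: general curvature bound} gives
\[
\min\bigl\{\,d_\Sigma(p,\partial\Sigma\cup\mathcal B),\ \Lambda^{-1/2}\,\bigr\}\ \le\ r:=\frac{C}{\sqrt2\,H}
\qquad\text{for all }p\in\Sigma .
\]
The assumption $\sqrt2\,H/\sqrt\Lambda\ge C$ is precisely the inequality $r\le\Lambda^{-1/2}$, so the minimum on the left must be realized by its first term; hence $d_\Sigma(p,\partial\Sigma\cup\mathcal B)\le r$ for all $p\in\Sigma$ (in particular $\mathcal B\neq\emptyset$ whenever $\Sigma$ is closed, since otherwise the left-hand side is infinite). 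Everything then reduces to promoting this bound on the distance to $\partial\Sigma\cup\mathcal B$ into a bound on the distance to $\partial\Sigma$ alone, respectively on $\mathrm{diam}_\Sigma(\Sigma)$, at the cost of a factor controlled by $|\mathcal B|\le I$.

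For the chaining step I would write $\mathcal B=\{q_1,\dots,q_k\}$ with $k\le I$ and consider the closed sets $B_\Sigma(q_1,r),\dots,B_\Sigma(q_k,r)$ (closed intrinsic $r$-balls) together with the closed $r$-neighbourhood $N_r=\{x\in\Sigma:d_\Sigma(x,\partial\Sigma)\le r\}$ of $\partial\Sigma$ when $\Sigma$ has boundary; by the previous paragraph these sets cover $\Sigma$. Let $G$ be the graph with one vertex for each of these sets and an edge between two vertices whenever the corresponding sets meet. Since $\Sigma$ is connected and is covered by finitely many closed sets, $G$ is connected: splitting its vertices into two parts with no edge between them would exhibit $\Sigma$ as a disjoint union of two nonempty closed sets. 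Suppose $\Sigma$ has boundary and $p\in\Sigma$; if $p\in N_r$ the desired bound is immediate, so assume $p\in B_\Sigma(q_i,r)$ for some $i$. A shortest path in $G$ from that vertex to the vertex $N_r$ has at most $k$ edges, as $G$ has at most $k+1$ vertices; each edge of it joins two of the chosen sets that share a point, so by the triangle inequality the corresponding centres are within $2r$ of each other, and along the last edge the final centre is within $2r$ of $\partial\Sigma$. Adding the initial step $d_\Sigma(p,q_i)\le r$ gives $d_\Sigma(p,\partial\Sigma)\le r+2rk\le(2I+1)r=(I+\tfrac12)\tfrac{\sqrt2\,C}{H}$, which is \eqref{eq: diameter bound boundary case}. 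When $\Sigma$ is closed, the same reasoning applied to two points $p,p'$ in balls $B_\Sigma(q_i,r)$ and $B_\Sigma(q_j,r)$ — now using a shortest $G$-path of at most $k-1$ edges between those two vertices — gives $d_\Sigma(p,p')\le r+2r(k-1)+r=2rk\le2Ir=\tfrac{\sqrt2\,IC}{H}$, i.e. \eqref{eq: diameter bound closed case}.

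I do not expect any real obstacle: once Theorem \ref{thm: curvature bounds} is available the Corollary is elementary, and the only thing demanding attention is arranging the covering and the graph-distance count so that the constants emerge exactly as $I+\tfrac12$ and $I$. The lone fussy point is the borderline case $\sqrt2\,H/\sqrt\Lambda=C$, i.e. $r=\Lambda^{-1/2}$, in which \eqref{eq: general curvature bound} no longer forces the minimum to be realized by $d_\Sigma(p,\partial\Sigma\cup\mathcal B)$; one can check that any point where this fails must be totally umbilic and treat that degenerate set separately, or simply note that the inequality is strict in every application of the Corollary.
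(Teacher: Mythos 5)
Your argument is correct and is essentially the paper's own proof: both reduce to the bound $d_\Sigma(p,\partial\Sigma\cup\mathcal B)\le C/(\sqrt2\,H)$ via $|A|\ge\sqrt2\,H$ and then chain through the at most $I$ points of $\mathcal B$, your nerve-graph count being just an explicit version of the paper's ``induction argument on $I$'' and yielding the same constants $(I+\tfrac12)$ and $I$. The borderline case $\sqrt2\,H/\sqrt\Lambda=C$ you flag is glossed over in the paper as well, so it is a fair minor caveat rather than a gap in your proof.
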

\begin{proof}
We will only show \eqref{eq: diameter bound boundary case} as the proof of
\eqref{eq: diameter bound closed case} is very similar.
As observed above,
\begin{equation}\label{eq: d_Sgima p}
d_\Sigma(p, \partial \Sigma \cup \B) \le \frac{C}{\sqrt 2H}
\end{equation}
for all $p \in \Sigma$. So the distance between any point in $\B$
and its complement in $\partial \Sigma \cup \B$ is at most 
$\frac{\sqrt 2 C}{H}$.
Then an induction argument on $I$ shows that any point
in $\B$ is at distance at most $I\frac{\sqrt 2 C}{H}$ from $\partial \Sigma$.
Now \eqref{eq: diameter bound boundary case} follows from this
fact together with \eqref{eq: d_Sgima p}.
\end{proof}

\subsection{Harmonic coordinates and sequences of rescaled surfaces}
\label{subsec: harm coords and sequences of}

Theorem \ref{thm: curvature bounds} does not depend on the
manifold $M$, only on its sectional curvature bounds. For this reason we will
have to make use of harmonic coordinates. More specifically we will use the
following result by Rosenberg, Souam and Toubiana:

\begin{theorem}[{\cite{Rosenberg-General-Curvature-Estimates}*{Theorem 2.1}}]
\label{thm: harmonic coords}
Let $\alpha \in (0,1)$ and $\delta >0$. Let $(M,g)$ be a
Riemannian 3-manifold without boundary with absolute sectional curvature bounds
$|K| \le \Lambda < \infty$. Let $\Omega$ be a open subset of $M$ and set
\[\Omega(\delta) = \{x \in M : d_M(x, \Omega) < \delta \}\]
where $d_M$ is the distance associated to $g$. Suppose there exists $i >0$ such
that for all $x \in \Omega(\delta)$ have $\text{inj}_{(M,g)}(x) \ge i$, where 
$\text{inj}_{(M,g)}(x)$ is the injectivety radius of $M$ at $x$. Then there exists
constants $Q_0, r_0 > 0$ depending only on $i, \delta,
\Lambda$ and $\alpha$, and not on $M$, such that for any $x \in \Omega$, there
exists a harmonic coordinate chart $(U, \varphi, B_M(x, r_0))$, where $U
\subset \R^3$ is an open set containing the origin, $B_M(x,r_0)$ is the
geodesic ball in $M$ centered at $x$ with radius $r_0$, and $\varphi: U \to
B_M(x, r_0)$ is such that $\varphi(0) = x$, and such that the
metric tensor $\varphi^*g$ is $C^{1,\alpha}$- controlled. Namely the 
components $(\varphi^*g)_{ij}$, $i,j = 1,2,3$ of $\varphi^*g$ satisfy:
\begin{equation}\label{eq: quadatic form bound}
	Q_0^{-1}\delta_{ij} \le (\varphi^*g)_{ij} \le Q_0\delta_{ij}
\end{equation}
as quadratic forms, and
\begin{equation}\label{eq: holder bound on tensor}
	\|(\varphi^*g)_{ij}\|_{C^{1,\alpha}(U)} \le Q_0.
\end{equation}
\end{theorem}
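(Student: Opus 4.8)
The plan is to run the classical construction of harmonic coordinates (DeTurck--Kazdan, Jost--Karcher) while tracking that every constant depends only on $i,\delta,\Lambda$ and $\alpha$. First I would fix $x\in\Omega$ and pass to geodesic normal coordinates centred at $x$. Since $\mathrm{inj}_{(M,g)}\ge i$ on $\Omega(\delta)$ and $|K|\le\Lambda$, Rauch comparison together with Hessian comparison for the distance function yield a radius $\rho_0=\rho_0(i,\delta,\Lambda)>0$ and a constant $C_0=C_0(i,\Lambda)$ so that on $B_M(x,\rho_0)$ the metric in these coordinates satisfies $C_0^{-1}\delta_{ij}\le g_{ij}\le C_0\delta_{ij}$ together with a Lipschitz bound $|\partial g_{ij}|\le C_0$. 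This gives a uniformly controlled background chart, a priori only Lipschitz.

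Next I would solve, on $B_M(x,\rho_0/2)$, the Dirichlet problems $\Delta_g u^k=0$ with $u^k=y^k$ on the boundary, where $y^1,y^2,y^3$ are the normal-coordinate functions. By the previous step $\Delta_g$ is uniformly elliptic with controlled coefficients, so Schauder and $W^{2,p}$ estimates bound $u^k-y^k$ in $C^{1,\alpha}$ by $C\rho_0^{\beta}$ for some $\beta=\beta(\alpha)>0$; hence for $\rho_0$ small, which I may arrange after shrinking by a factor depending only on $i,\delta,\Lambda,\alpha$, the map $\varphi^{-1}:=(u^1,u^2,u^3)$ is a diffeomorphism from a geodesic ball $B_M(x,r_0)$, $r_0=r_0(i,\delta,\Lambda,\alpha)>0$, onto an open set $U\ni 0$ in $\R^3$ with uniformly controlled inverse. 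Transporting $g$ through $\varphi$ and combining the $C^{1,\alpha}$ bounds on $\varphi$ with those on $g$ in the normal chart gives \eqref{eq: quadatic form bound} with a $Q_0$ of the required form, after one further harmless shrinking.

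Finally I would exploit the harmonic-coordinate identity $g^{ij}\Gamma^k_{ij}=0$, which turns the Ricci tensor into the source of a quasilinear elliptic system for the metric,
\[
-\tfrac12\,g^{ab}\,\partial_a\partial_b\,g_{ij}=\mathrm{Ric}_{ij}-Q_{ij}(g,\partial g),
\]
where $Q_{ij}$ is a universal quadratic expression in $\partial g$ with coefficients rational in $g$. In dimension three $|K|\le\Lambda$ forces $|\mathrm{Ric}|\le 2\Lambda$, and by the previous step $g$ is uniformly elliptic with $C^{0,\alpha}$-bounded coefficients, so interior $W^{2,p}$ estimates applied to this system on a slightly smaller ball give $\|g_{ij}\|_{W^{2,p}}\le Q_0$ for every finite $p$; Sobolev embedding then upgrades this to the $C^{1,\alpha}$ bound \eqref{eq: holder bound on tensor}, after relabelling $U$ and $r_0$.

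The main obstacle is the quantitative part of the middle step: showing that the harmonic coordinate map is an honest chart, i.e.\ a diffeomorphism, on a ball whose radius is bounded below \emph{uniformly} over all admissible $(M,g)$ and all $x\in\Omega$. This reduces to proving $\|u^k-y^k\|_{C^{1,\alpha}}\to 0$ as $\rho_0\to 0$ with a rate depending only on $i,\delta,\Lambda,\alpha$, which in turn rests on the correct scaling of the elliptic estimate and on the uniformity of the comparison-geometry bounds of the first step. Everything downstream of that is routine elliptic regularity. (I note that this is precisely the content of the cited theorem of Rosenberg, Souam and Toubiana, so in the paper one simply invokes it.)
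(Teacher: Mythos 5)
The paper offers no proof of this statement: it is imported verbatim from Rosenberg--Souam--Toubiana (their Theorem 2.1), which in turn rests on the classical uniform harmonic-radius estimates of Jost--Karcher and Anderson. So there is nothing in the paper to compare your argument against line by line; what you have written is a reconstruction of the standard proof of the cited result, and as a roadmap it is the right one: controlled background chart, harmonic replacement of the coordinate functions, quantitative invertibility on a uniformly sized ball, and then the identity $g^{ij}\Gamma^k_{ij}=0$ turning Ricci into the source of an elliptic system for $g_{ij}$, from which $W^{2,p}$ and hence $C^{1,\alpha}$ bounds follow. You also correctly identify the genuinely delicate step, namely the uniform lower bound on the radius on which $(u^1,u^2,u^3)$ is a diffeomorphism.

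The one real soft spot is your first step. From $|K|\le\Lambda$ and $\mathrm{inj}\ge i$ alone, Rauch comparison gives $C^0$ control of $g_{ij}$ in geodesic normal coordinates, but the asserted uniform Lipschitz bound $|\partial g_{ij}|\le C_0$ in \emph{normal} coordinates is not a standard consequence of comparison geometry: the angular derivatives of the metric are not controlled by the Hessian comparison for the distance function, and in fact obtaining a background chart with uniform $C^1$ control of the metric is precisely why Jost--Karcher work with ``almost linear'' coordinates built from distance functions (whose Hessians \emph{are} controlled by comparison) rather than with the exponential map. If you replace your normal coordinates by such almost-linear coordinates, the rest of your argument goes through: the Laplacian is then uniformly elliptic in divergence form with $C^{0,1}\subset C^{0,\alpha}$ coefficients, De Giorgi--Nash--Moser and Schauder give the $C^{1,\alpha}$ closeness of $u^k$ to the background coordinates with the correct scaling in $\rho_0$, and the final bootstrap is routine. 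As the statement is invoked as a black box in the paper, none of this affects the paper itself, but your write-up should not present the Lipschitz bound in normal coordinates as an immediate consequence of Rauch.
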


The proofs of all theorems presented in this paper make use of sequences of 
rescaled surfaces. What we mean by rescaled surfaces will be
slightly different for different proofs. Following
\cite{Rosenberg-General-Curvature-Estimates}, we will present here the
definition used in the proof of Theorem \ref{thm: curvature bounds}, which is
more involved because the surfaces might only be immersed and
because the ambient space is not fixed.

Suppose $\{(M_n,g_n)\}_{n \in \N}$ is a sequence of complete 3-manifolds with
uniformly bounded absolute sectional curvature $ |K_{n}| \le \Lambda < \infty$.
For each $n \in \N$ let $\Sigma_n$ be an immersed $H_n$-surface in $M_n$ (that
is, a constant mean curvature surface with mean curvature $H_n$), possibly with
boundary, and take a sequence of points $p_n \in \Sigma_n \setminus
\partial\Sigma_n$. For any sequence of positive numbers $\{\sigma_n\}_{n \in \N}$ 
with $\sigma_n \to \infty$ We will define a \textit{sequence of rescaled surfaces} 
\[\sigma_n(\Sigma_n - p_n)\]
immersed in $\R^3$.

By abuse of notation, denote image of $p_n$ under the immersion $\Sigma_n \to M_n$ by $p_n$. 
Since the exponential map of any of the $M_n$ at any point is a local differomorphism when
restricted to the euclidean ball of radius $\frac{\pi}{\sqrt \Lambda}$ we may
endow this ball with the pull-back metric $\exp_{p_n}^* g_n$, which we will also
denote by $g_n$.
By \cite{Rosenberg-General-Curvature-Estimates}*{Lemma 2.2} the closed ball
$\overline{B_{\R^3}(0, \frac{\pi}{2\sqrt \Lambda})} \subset (B_{\R^3}(0,
\frac{\pi}{\sqrt \Lambda}), g_n)$
has injectivity radius at least $ \frac{\pi}{4\sqrt \Lambda}$. Fix an $\alpha \in (0,1)$ and
consider $r_0, Q_0$ given by applying Theorem \ref{thm: harmonic coords}
with $\delta = \frac{\pi}{8\sqrt \Lambda}$ and $i = \frac{\pi}{4\sqrt \Lambda}$.
For all $n \in \N$ take a
harmonic chart $(U_n, \varphi_n, B_{\R^3}(0, r_0))$ with $0 \in U_n$ and 
$\varphi_n(0) = 0$. We define 
\[\Sigma_n - p_n \subset (U_n, \varphi_n^*g_n)\] 
to be the connected component of
$(\exp_{p_n} \circ \varphi_n)^{-1}(\Sigma_n)$ passing trough the origin. Note
that this is an immersed $H_n$-surface. For all $\sigma>0$ let $\mu_\sigma: \R^3 \to \R^3$ be
the dilation $x \mapsto \sigma x$. Define $\tilde g_n = (\mu_{\sigma_n} \circ \varphi_n)^*g_n$
It is clear that
\[\sigma_n(\Sigma_n - p_n) \subset (\mu_{\sigma_n}(U_n), \tilde g_n)\]
is an immersed $\frac{H_n}{\sigma_n}$-surface. 
We will often use that $(\mu_{\sigma_n}(U_n), \tilde
g_n) \to (\R^3, \delta_{ij})$ locally in $C^{1,\alpha}$ 
\cite{Rosenberg-General-Curvature-Estimates}.

\subsection{Proof of Theorem \ref{thm: curvature bounds}}\label{subsec: pf of
curv bound}

Following the arguments of Chodosh, Ketover and Maximo
\cite{Davi-Minimal-hypersurfaces-with-bdd-index}*{Lemma 2.2} we
will use induction on $I$. The case $I = 0$ is done in
\cite{Rosenberg-General-Curvature-Estimates}. Let $I >0$ and assume by
contradiction that the theorem does not hold for $I$. 
Then there is a sequence $\{(M_n,g_n)\}_{n \in \N}$ of complete $3$-manifolds
with absolute sectional curvature  bounds $|K_{n}| \le \Lambda_n< \infty$
and such that for each $n \in \N$ there is an
$H_n \ge 0$ and an immersed $H_n$-surfaces $\Sigma_n \to M_n$ and 
points $q_n \in \Sigma_n$ such that
\[|A_n|(q_n)\min\{d_{\Sigma_n}(q_n, \partial \Sigma_n), (\sqrt \Lambda_n)^{-1} \} > n\]
where $|A_n|$ is the norm of the second fundamental form of $\Sigma_n$.
Since the left hand side of the inequality above is invariant under rescaling 
$g_n$ we can assume that all $\Lambda_n$ are equal to a constant $\Lambda$.

Let $R_n = \min\{d_{\Sigma_n}(q_n, \partial \Sigma_n), (\sqrt \Lambda)^{-1}\}$ and
denote $B_{\Sigma_n}(q_n, R_n)$ by $D_n$. Choose $p_n \in \Sigma_n$ so that
\[|A_n|(p_n)d_{D_n}(p_n,\partial D_n) = 
\max_{p \in D_n} |A_n|(p)d_{D_n}(p,\partial D_n).\]
Let $\tilde R_n = d_{D_n}(p_n,\partial D_n)$ and denote $B_{D_n}(p_n, \tilde R_n)$ by $\tilde
D_n$.
By construction
\[|A_n|(p_n) d_{\tilde D_n}(p_n,\partial \tilde D_n) = 
\max_{p \in \tilde D_n} |A_n|(p)d_{\tilde D_n}(p,\partial \tilde D_n).\]
It follows that for any $p \in B_{\tilde D_n}(p_n, \frac{\tilde R_n}{2})$ we have
that $|A_n|(p) \le 2|A_n|(p_n)$.

Let $\lambda_n = |A_n|(p_n)$ and consider the sequence of rescaled surfaces
\[\lambda_n(\tilde D_n - p_n).\]
Note that $\lambda_n(\tilde D_n  - p_n)$ is an
$\frac{H_n}{\lambda_n}$-surface and the norm of its second fundamental is
bounded above by $2$ in $B_{\R^3}(0, \frac{\lambda_n \tilde R_n}{2})$
with the dilated pull-back metric. The distance between the origin 
$0 \in \lambda_n(\tilde D_n - p_n)$ and the 
boundary of $\lambda_n(\tilde D_n - p_n)$ is at least $\lambda_n \tilde R_n$.

The arguments in \cite{Rosenberg-General-Curvature-Estimates} show that there is
a complete immersed $H$-surface $S$ in the euclidean 3-space
$(\R^3, g_{eucl})$ such that:
\begin{enumerate}
\item $0 \in S$;
\item $|A_S|(0) = 1$; 
\item $\|A_S\|_{C^0} \le 5$;
\item passing to a subsequence, there are pieces 
$\Delta_n \subset \lambda_n(\tilde D_n - p_n)$ such that
\[ \Delta_n \to S\]
in the sense that there is a $\delta' >0$ such that for any $x \in S$, and $n$
large enough, the components of $\Delta_n$ close to $B_S(x,
\delta)$ are graphical over $D(\delta) \subset T_x S$ and converge to
$B_S(x,\delta)$ in $C^2$-norm.
\end{enumerate}

For $R'$ large enough, the index of
\[B_{\Sigma_n}(p_n,R'/\lambda_n)\]
is at least one for $n$ sufficiently large, else $S$ must be stable,
contradicting the fact that $S$ is
complete and non-flat \cite{Rosenberg-General-Curvature-Estimates}.
By induction there is a set 
\[\hat \B_n \subset \hat \Sigma_n = \Sigma_n \setminus 
B_{\Sigma_n}(p_n, R'/\lambda_n)\]
with at most $I-1$ points such that
\[|A_n|(x)\min\{d_{\hat \Sigma_n}(x, \partial \hat \Sigma_n \cup \hat
\B_n),(\sqrt \Lambda)^{-1}\} \le C\]
We claim that we can take $\B_n = \hat \B_n \cup \{p_n\}$. Assume there is $z_n
\in \Sigma_n$ such that
\[|A_n|(z_n)\min\{d_{\Sigma_n}(z_n, \partial\Sigma_n \cup  \B_n),
(\sqrt \Lambda)^{-1}\} \to \infty.\]

We will consider two cases. First assume
\[\liminf_{n \to \infty} \lambda_n d_{\Sigma_n}(z_n, p_n) < \infty.\] 
Passing to a subsequence we get that
\[\frac{d_{\Sigma_n}(z_n, p_n)}{\tilde R_n} = 
\frac{\lambda_n d_{\Sigma_n}(z_n,p_n)}{\lambda_n \tilde R_n} \to 0.\]
So, for $n$ large enough $z_n \in B_{\Sigma_n}(p_n, \tilde R_n/2)$, and hence
\[|A_n|(z_n) \le 2\lambda_n.\]
We conclude that
\[\lim_{n \to \infty} |A_n|(z_n)d_{\Sigma_n}(z_n, p_n) 
\le \lim_{n \to \infty} 2\lambda_n d_{\Sigma_n}(z_n, p_n) < \infty.\]
Contradicting the choice of $z_n$. Now assume that 
\[\liminf_{n \to \infty} \lambda_n d_{\Sigma_n}(z_n, p_n) = \infty.\]
It follows that for $n$ large enough $d_{\Sigma_n}(z_n, p_n) > R'/\lambda_n$,
hence $z_n \in \hat \Sigma_n$, contradicting the inductive hypothesis. \qed
\begin{remark}
Note that this same proof works in the case where $M$ has a boundary as long as
$\Sigma$ is away from this boundary.
\end{remark}

\section{Picture near blow-up points}
\label{sec: local picture near blow-up pts}

In this section we will consider sequences embedded CMC surfaces $\{\Sigma_n\}$
in a fixed closed 3-manifold $(M,g)$. Since the $\Sigma_n$ are embedded and the
ambient manifold is fixed we can make sense of a sequences of rescaled surfaces
\[\sigma_n(\Sigma_n - p_n)\]
in a slightly different way to that presented in Subsection \ref{subsec: harm
coords and sequences of}. Namely, we can take harmonic charts $(U_n, \varphi_n, B_M(x,p_n))$ and
define 
\[\Sigma_n - p_n \subset (U_n, \varphi_n^*g_n)\]
to be the pre-image of the component of $\Sigma_n \cap B_M(x,p_n)$ 
passing by $p_n$. Then
$\sigma_n(\Sigma_n - p_n)$ has as the properties listed in Subsection
\ref{subsec: harm coords and sequences of} in addition to being embedded.

Following \cite{Davi-Minimal-hypersurfaces-with-bdd-index} we say that a
sequence of finite sets of points $\B_n \subset \Sigma_n$ is a 
\textit{sequence of blow-up sets} if:

\begin{enumerate}
\item The curvature blows up at the $\B_n$, that is, taking
$\lambda_n(p) = |A_n|(p)$ we have
\[\liminf_{n \to \infty} \min_{p \in \B_n} \lambda_n(p) \to \infty.\]

\item Taking any sequence of points $p_n \in \B_n$ we can pass to a subsequence so
	\[\lambda_n(p_n)(\Sigma_n - p_n) \to \mathcal{L}_\infty,\]
where $\mathcal L_\infty$ is a weak CMC lamination of euclidean 3-space
with $\|A_{\mathcal L_\infty}\|_{C^0} \le 5$ and 
$|A_{\mathcal L_\infty}|(0) = 1$.

\item The blow-up points do not appear in the blow-up limit of the other points,
	that is
\[\liminf_{n \to \infty}\min_{p, q \in \B_n, p\neq q}
\lambda_n(p) d_{M}(p, q) = \infty\]
\end{enumerate}

\begin{proposition}\label{prop: blow-up set}
Suppose that the index of the $\Sigma_n$ is uniformly bounded above by a constant
$I < \infty$. Then there is a sequence of blow-up sets $\B_n \subset \Sigma_n$ with 
cardinality at most $I$ and a constant $C < \infty$ depending only on $I$
such that
\begin{equation}\label{eq: curvature bounds embedded case}
|A_n|(x)\min\{d_M(x, \partial \Sigma_n \cup \B_n), (\sqrt \Lambda)^{-1} \} \le C
\end{equation}
for all $n$ and all $x \in \Sigma_n$.
\end{proposition}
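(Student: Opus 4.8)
The plan is to reproduce the inductive scheme of the proof of Theorem~\ref{thm: curvature bounds}, carried out inside the fixed ambient manifold $(M,g)$---so that $\Lambda=\Lambda(M,g)$ is fixed and the inductive hypothesis applies verbatim to subdomains of the $\Sigma_n$---while recording the extra data needed to verify that $\B_n$ is a sequence of blow-up sets. I would induct on $I$. For $I=0$ the $\Sigma_n$ are stable, so \eqref{eq: curvature bounds embedded case} is the $I=0$ case of Theorem~\ref{thm: curvature bounds} combined with $d_M\le d_{\Sigma_n}$, and $\B_n=\emptyset$ makes conditions~(1)--(3) vacuous.

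For the inductive step I assume the statement for $I-1$. If, after passing to a subsequence, $\sup_n\max_{x\in\Sigma_n}|A_n|(x)\min\{d_M(x,\partial\Sigma_n),(\sqrt\Lambda)^{-1}\}<\infty$, then $\B_n=\emptyset$ works. Otherwise I run the point-selection of the proof of Theorem~\ref{thm: curvature bounds} to obtain $p_n\in\Sigma_n$ with $\lambda_n:=|A_n|(p_n)\to\infty$, with $|A_n|\le 2\lambda_n$ on an intrinsic ball about $p_n$ whose rescaled radius diverges, and with $H_n/\lambda_n$ bounded. The key sub-claim is that some intrinsic ball $B_{\Sigma_n}(p_n,R'/\lambda_n)$ has index at least one for $R'$ and $n$ large: otherwise a subsequence of the (embedded) rescaled surfaces $\lambda_n(\Sigma_n-p_n)$ converges to a weak CMC lamination of $\R^3$ whose leaf through the origin is a complete, non-flat, stable CMC surface, contradicting the $I=0$ case of Theorem~\ref{thm: curvature bounds} applied in $(\R^3,\delta)$, where it forces $|A|\equiv0$. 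Then $\hat\Sigma_n:=\Sigma_n\setminus B_{\Sigma_n}(p_n,R'/\lambda_n)$ has index at most $I-1$, since the index is superadditive over disjoint subdomains; applying the inductive hypothesis to $\{\hat\Sigma_n\}$ yields a sequence of blow-up sets $\hat\B_n\subset\hat\Sigma_n$ with at most $I-1$ points satisfying \eqref{eq: curvature bounds embedded case}. I set $\B_n:=\hat\B_n\cup\{p_n\}$, which has at most $I$ points; after arranging $R'/\lambda_n<\tilde R_n/2$ so that $|A_n|\le 2\lambda_n$ on the removed ball, \eqref{eq: curvature bounds embedded case} for $\B_n$ follows exactly as in the final paragraph of the proof of Theorem~\ref{thm: curvature bounds} (splitting according to whether $\liminf_n\lambda_n d_{\Sigma_n}(x,p_n)$ is finite or infinite).

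It remains to check the three blow-up conditions. Condition~(1) is immediate from $\lambda_n(p_n)\to\infty$ and condition~(1) for $\hat\B_n$. For condition~(2) at the point $p_n$: by \eqref{eq: curvature bounds embedded case} and $|A_n|\le 2\lambda_n$ near $p_n$, on each fixed ball the rescalings $\lambda_n(\Sigma_n-p_n)$ have second fundamental form bounded away from the rescaled positions of the other blow-up points, so---once condition~(3) below has been arranged, so that those positions escape to infinity---they subconverge to a weak CMC lamination of $\R^3$ with $|A|(0)=1$ and $\|A\|_{C^0}\le 5$, as in \cite{Davi-Minimal-hypersurfaces-with-bdd-index}; at points of $\hat\B_n$ condition~(2) is inherited from the inductive hypothesis, after comparing the rescaling there with the one at the cluster representative. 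Condition~(3) need not hold for $\B_n$ as constructed. To repair it I pass to a further subsequence so that $\lim_n\lambda_n(p)\,d_M(p,q)$ exists in $[0,\infty]$ for every pair of distinct $p,q\in\B_n$, call $p$ and $q$ equivalent when this limit is finite, and keep one point (say one of largest curvature) from each equivalence class, obtaining $\B_n'\subset\B_n$. Then $\B_n'$ has at most $I$ points and satisfies~(1),~(2),~(3); it still obeys \eqref{eq: curvature bounds embedded case}, with a constant depending only on $I$, because any $x$ for which deleting some $q\in\B_n$ increases the relevant distance lies within bounded rescaled distance of $q$'s retained representative $p$, so $d_M(x,\partial\Sigma_n\cup\B_n')\le d_M(x,q)+O(\lambda_n(p)^{-1})$ while $|A_n|(x)=O(\lambda_n(p))$ at that scale. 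This closes the induction.

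I expect the last step to be the main obstacle: simultaneously arranging condition~(3)---separation measured in the \emph{ambient} distance $d_M$---and preserving the curvature estimate \eqref{eq: curvature bounds embedded case}, which forces one to control the gap between intrinsic and extrinsic distances for blow-up points that cluster at the common scale. Pinning down the constant $5$ in the $C^0$ bound on the limit lamination is a related technical point, handled as in \cite{Davi-Minimal-hypersurfaces-with-bdd-index}. A smaller difficulty is the index-positivity of the extracted bubble in the embedded setting, where the limit object is a lamination and one argues through its leaf through the origin rather than through a single limit surface.
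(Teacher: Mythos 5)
Your skeleton coincides with the paper's: the sets $\B_n$ are exactly those produced by the inductive point-selection in the proof of Theorem \ref{thm: curvature bounds}, with the intrinsic distance replaced by the extrinsic one; the cardinality bound, condition (1), and condition (2) then follow from the bound $|A_n|\le 2\lambda_n$ on balls of diverging rescaled radius together with compactness of embedded surfaces with bounded second fundamental form. Where you diverge is condition (3). You assert that separation ``need not hold for $\B_n$ as constructed'' and graft on a merging procedure; the paper instead proves that the constructed sets already separate, and this is essentially the entire content of its proof. The argument is short: if some $q_n\in\hat\B_n$ satisfied $\liminf_n \lambda_n\, d_M(p_n,q_n)<\infty$, then $q_n$ would lie in the region where $|A_n|$ is controlled by $\lambda_n$, so $|A_n|(q_n)\,d_M(q_n,p_n)$ would stay bounded; since $p_n$ is within $R'/\lambda_n$ of $\partial\hat\Sigma_n$, this would force $|A_n|(q_n)\,d_M(q_n,\partial\hat\Sigma_n)$ to stay bounded, contradicting the fact that $q_n$ was itself selected at a later inductive stage as a maximizer of a quantity that must diverge. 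The maximality built into the point-selection makes (3) automatic; no merging is needed.

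Your fallback merging step is viable in principle but, as written, has two gaps that you only gesture at. First, the relation ``$\lim_n\lambda_n(p)\,d_M(p,q)<\infty$'' is neither obviously symmetric nor transitive unless the curvature scales of clustering points are comparable, which you do not establish; with at most $I$ points this is finite combinatorics and can be forced through, but it must be done, and the outcome must still give divergence of $\lambda_n(p)\,d_M(p,p')$ in \emph{both} orders for retained points. Second, the claim that \eqref{eq: curvature bounds embedded case} survives deletion of $q$ needs a case split: for $x$ within bounded $\lambda_n(p)$-rescaled distance of $q$ one must invoke $|A_n|\le 2\lambda_n(q)\le 2\lambda_n(p)$ on a ball of diverging rescaled radius around $q$ (available only because $q$ was itself a point-selection maximum and $p$ has the largest curvature in its class), while for $x$ at diverging rescaled distance one uses $d_M(q,p)/d_M(x,q)\to 0$. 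Your single estimate does not distinguish these regimes and, read literally, fails for $x$ much closer to $q$ than $\lambda_n(p)^{-1}$, where $|A_n|(x)$ is not $O(\lambda_n(p))$ a priori but only $O(1/d_M(x,q))$. So the proposal is repairable but not complete; the paper's direct verification of (3) avoids all of this.
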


\begin{proof}
We will show that one can take the sets $\B_n$ as in the proof of Theorem
\ref{thm: curvature bounds} by
changing the distance function from the intrinsic
distance to the extrinsic distance. It is clear that these sets have cardinality at
most $I$ and that the $\lambda_n(p_n)(\Sigma_n - p_n)$
are embedded with bounded second fundamental form and the distance from the
origin to boundary of $\lambda_n(p_n)(\Sigma_n - p_n)$ diverges.
So it follows that they must subconverge to a weak CMC lamination of euclidean 3-space.
(see \cite{Colding-Minicozzi-The-Space-of-embeded-min-surf-of-fixed-genus-IV}*{Appendix
B})

To show the third item in the definition holds, let $\hat \B_n$ be as constructed
in the proof of Theorem \ref{thm: curvature bounds} and assume by contradiction
that 
\[\liminf_{n \to \infty}\min_{q \in \hat \B_n} \lambda_n d_{M}(p_n, q) <
\infty.\]
Take $q_n$ to be a point in $\hat \B_n$  closest to $p_n$ and let
$\eta_n = |A_n|(q_n)$, repeating the arguments from the proof of Theorem
\ref{thm: curvature bounds} we have that, passing to a
subsequence, for $n$ large enough
\[\eta_n \le \frac{\lambda_n}{2}.\]
So $ \liminf_{n \to \infty} \eta_nd_{M}(q_n, p_n) < \infty.$
This clearly implies 
\[\lim_{n \to \infty}|A_n|(q_n)d_{M}(q_n, \partial \hat \Sigma_n) <
\infty\]
contradicting the choice of $q_n$.
\end{proof}

In the case where the mean curvature 
of the $\Sigma_n$ are uniformly bounded, repeating the arguments
in \cite{Davi-Minimal-hypersurfaces-with-bdd-index}*{Section 4} we can conclude:

\begin{lemma}\label{lemma: picture near blow-up sets}
Suppose $\{\Sigma_n\}$ is a sequence of CMC surfaces embedded in a complete
Riemannian 3-manifold $(M,g)$ with mean curvature uniformly bounded above by 
a constant $H$. Suppose the blow-up sets $\B_n$
accumulate on a set $\B_\infty$. Then there is a $\delta > 0$
such that the set 
\[B_M(\B_\infty, \delta) = \bigcup_{p_\infty \in \B_\infty} B_M(p_\infty, \delta)\]
is a union of disjoint balls where the following holds: Write
$\Sigma'_n$ for the connected components of 
$B_M(\B_\infty, \delta) \cap \Sigma_n$ containing points
in $\B_n$ and $\Sigma''_n$ for the other connected components of
$B_M(\B_\infty, \delta) \cap \Sigma_n$.
Then there are functions $m = m(I,H)$ and $r = r(I,H)$ such that for $n$ large:
\begin{enumerate}
\item The surfaces $\Sigma_n''$ are CMC disks with second fundamental form
	uniformly bounded above.
\item The surfaces $\Sigma_n'$ intersects $\partial B_M(\B_\infty,\delta)$ transversely
in at most $m$ simple closed curves.
\item The surfaces $\Sigma_n'$ have genus at most $r$;
\item The surfaces $\Sigma_n'$ have uniformly bounded area.
\end{enumerate}
\end{lemma}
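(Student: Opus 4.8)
This is the CMC counterpart of \cite{Davi-Minimal-hypersurfaces-with-bdd-index}*{Section 4}, and the plan is to run that argument; I describe its skeleton and the few places where the mean curvature genuinely intervenes. Passing to a subsequence we may assume $|\B_n|$ equals a constant $k\le I$, say $\B_n=\{p_n^1,\dots,p_n^k\}$ with $p_n^i\to p_\infty^i\in\B_\infty$, so $|\B_\infty|\le I$. I would choose $\delta$ smaller than a quarter of the injectivity radius of $M$ and of $\Lambda^{-1/2}$, small enough that the balls $B_M(p_\infty^i,\delta)$ are pairwise disjoint and each sits in a harmonic chart where $g$ is $C^{1,\alpha}$-close to the Euclidean metric (Theorem \ref{thm: harmonic coords}), and then argue in one such ball $B=B_M(p_\infty,\delta)$ (shrinking $\delta$ a bounded number of times). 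For $n$ large all of $\B_n\cap B$ lies in $B_M(p_\infty,\delta/100)$, and by \eqref{eq: curvature bounds embedded case}, since $\partial\Sigma_n=\emptyset$, we have $|A_n|(x)\le C/\min\{d_M(x,\B_n),\Lambda^{-1/2}\}$ for all $x\in\Sigma_n$.

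The first substantive point is that the blow-up limits are \emph{minimal}: for each $i$ the rescaled surfaces $\lambda_n(p_n^i)(\Sigma_n-p_n^i)$ have mean curvature $H_n/\lambda_n(p_n^i)\to0$, so their subsequential limit $\mathcal{L}^i$ from Proposition \ref{prop: blow-up set} is a minimal lamination of $\R^3$ with $\|A\|_{C^0}\le 5$ whose leaf $\ell_0^i$ through the origin is non-flat (as $|A_{\mathcal{L}^i}|(0)=1$). The strong stability form of $\Sigma_n$ rescales to $\int|\nabla u|^2-(|A|^2+\mathrm{Ric}_{g_n}(N))u^2$ with $\mathrm{Ric}_{g_n}\to0$, so $\ell_0^i$ has index at most $I$ as a minimal surface in $\R^3$; by the Fischer--Colbrie theorem it has finite total curvature, and --- this is the content of \cite{Davi-Minimal-hypersurfaces-with-bdd-index}*{Section 4}, via the Colding--Minicozzi one-sided curvature and chord--arc estimates --- there are $R_0=R_0(I)$, $m_0=m_0(I)$, $r_0=r_0(I)$ so that $\mathcal{L}^i$ has genus $\le r_0$, essentially one non-flat leaf, and at most $m_0$ ends, and so that for $n$ large $\lambda_n(p_n^i)(\Sigma_n-p_n^i)$ restricted to the complement of $B_{\R^3}(0,R_0)$ is a union of at most $m_0$ multigraphs of small gradient with quadratic area growth.

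Transporting this back to $M$: since $\lambda_n(p_n^i)\to\infty$ we have $R_0\lambda_n(p_n^i)^{-1}<\delta/100$ for $n$ large, so $\Sigma_n\cap B$ decomposes into \emph{neck pieces} contained in $\bigcup_iB_M(p_n^i,R_0\lambda_n(p_n^i)^{-1})$, which after rescaling are $C^2$-close to $\mathcal{L}^i\cap B_{\R^3}(0,R_0)$, hence have genus $\le r_0$ and area $\le C(I)R_0^2\lambda_n(p_n^i)^{-2}\to0$; and a \emph{graphical region} on which $d_M(\cdot,\B_n)$ is bounded below by the relevant scale, so $|A_n|$ is controlled and, with embeddedness, $\Sigma_n$ there is a union of graphs of small gradient over geodesic disks --- disks and annuli --- parametrized as in Appendices \ref{app: uniform graph} and \ref{app: alternating CMC graphs}. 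Using the near-uniqueness of the non-flat leaf of $\mathcal{L}^i$ one checks, as in \cite{Davi-Minimal-hypersurfaces-with-bdd-index}, that a component $\Sigma_n''$ disjoint from $\B_n$ carries no curvature concentration and stays a definite distance from every $p_n^i$, hence has $|A_n|$ uniformly bounded and is a CMC disk: this is (1). The trace of $\Sigma_n'$ on $\partial B_M(p_\infty,\delta)$ is accounted for by the $\le m_0$ ends of the $\mathcal{L}^i$ together with the graphical annuli, and a standard argument (curvature bound on $B\setminus B_M(p_\infty,\delta/2)$, the lower monotonicity bound $|\Sigma_n\cap B_M(x,\rho)|\ge c(H,\Lambda)\rho^2$, and the area bound) gives transversality for a.e.\ radius and caps the number of curves by $m(I,H)$: this is (2). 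Adding the genus of the neck pieces to the contribution of cutting along these $\le m(I,H)$ curves gives (3), and adding the $o(1)$ area of the neck pieces to that of the $\le m(I,H)$ graphical sheets of area $\le C\delta^2$, summed over the $\le I$ blow-up points, gives (4).

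The main obstacle is the middle step together with the near-uniqueness of the non-flat leaf: bounding the total curvature --- hence genus, number of ends, and area-growth constant --- of $\ell_0^i$ \emph{by a function of $I$ alone}, and controlling the sheets of $\Sigma_n$ at scales between $\lambda_n^{-1}$ and $\delta$ so that they are genuinely governed by the ends of $\mathcal{L}^i$. This is the technical heart of \cite{Davi-Minimal-hypersurfaces-with-bdd-index}*{Section 4}; in the present setting it transfers essentially verbatim, since after rescaling one works with a genuinely minimal limit, the only direct appearances of $H$ being in the local graph parametrization (Appendices \ref{app: uniform graph} and \ref{app: alternating CMC graphs}) and in the error term of the monotonicity formula, both harmless under the standing bound $H_n\le H$.
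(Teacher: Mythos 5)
Your proposal is correct and follows exactly the route the paper takes: the paper offers no written proof of this lemma beyond the assertion that one repeats the arguments of \cite{Davi-Minimal-hypersurfaces-with-bdd-index}*{Section 4}, and your sketch is a faithful (indeed more detailed) account of that adaptation, correctly isolating the key point that the rescaled blow-up limits are minimal because $H_n/\lambda_n(p_n^i)\to 0$, so the Chodosh--Ketover--Maximo structure theory applies verbatim, with $H$ entering only through the graph parametrization and monotonicity constants.
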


\section{Picture away from blow-up points}\label{sec: picture away from blow-up}

In this section we will show that a sequence of CMC surfaces embedded in a space
with finite fundamental group and with mean curvature bounded away from zero cannot
accumulate in a region where they have uniformly bounded second fundamental 
form (Lemma \ref{lemma: local picture away from blow-up}). The results
presented here will be used to prove Theorem \ref{thm: area bound} and will also
be needed to remove any upper mean curvature bound assumption on Theorem
\ref{thm: topology bound}. We will state and prove some of our results for hypersurfaces
in manifolds of arbitrary dimension, but for our applications we only need to
consider surfaces in 3-manifolds.

\subsection{Two-sided hypersurfaces in spaces with finite fundamental group}
\label{subsec: orentable surf in spaces}
Let $(M,g)$ be a Riemannian manifold (possibly with boundary) with
finite fundamental group. Let $\Sigma \subset M$ be a complete properly embedded
two-sided hypersurface and let $N$ be a unit normal vector field of $\Sigma$.
Suppose $\gamma: [0,1] \to M$  is a differentiable curve between points in
$M \setminus \Sigma$ which is transversal to $\Sigma$.
Write $\gamma^{-1}(\Sigma) = \{t_1, \cdots, t_n\}$ where $t_1 < \cdots < t_n$.
We say that $\Sigma$ is $\ell$-\textit{alternating} if for any such path 
$\gamma$ one of the following holds:
\begin{itemize}
	\item Either $\ell > n - 1$;
	\item or there is a $j \le \ell +1 $ such that
\[g(\gamma'(t_1), N(\gamma(t_1)))\ \text{and}\ 
g(\gamma'(t_{j}), N(\gamma(t_{j})))\]
have opposite signs.
\end{itemize}

\begin{proposition}\label{prop: alternating surfaces}
Suppose $|\pi_1(M)| = \ell < \infty$. Then any complete properly embedded two-sided 
hypersurface $\Sigma \subset M$ is $\ell$-alternating.
\end{proposition}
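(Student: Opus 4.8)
The plan is to argue by contradiction using the fundamental group as an obstruction to the existence of a path that crosses $\Sigma$ too many times without the normal ever reversing. Suppose $\Sigma$ is not $\ell$-alternating. Then there is a transversal path $\gamma:[0,1]\to M$ with crossing times $t_1<\cdots<t_n$, $n-1\geq \ell$, such that $g(\gamma'(t_i),N(\gamma(t_i)))$ has the same sign for all $i=1,\dots,\ell+1$ — say without loss of generality that $\gamma$ crosses $\Sigma$ in the direction of $N$ at each of $t_1,\dots,t_{\ell+1}$. The key geometric point is that because $\Sigma$ is two-sided and properly embedded, a sufficiently short normal exponential segment starting just before $t_i$ and ending just after $t_i$ stays on the "$N$ side'' coming in and the "$-N$ side'' going out; I want to splice such segments together with the ``$+N$ side" pieces of $\gamma$ to build a loop.

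First I would set up the side function. Let $M\setminus\Sigma = \bigsqcup_k \Omega_k$ be the decomposition into connected components (open sets). Define, for a point $x$ near $\Sigma$ on the $N$-side, that $x$ lies in the component reached by flowing along $+N$; the two-sidedness of $\Sigma$ guarantees that near each point of $\Sigma$ exactly two components meet it (or possibly one, if $\Sigma$ is one-sided in the ambient — but here $\Sigma$ is assumed two-sided, and I should note the local picture: a tubular neighborhood $\Sigma\times(-\varepsilon,\varepsilon)$ with the $N$-direction being the positive coordinate). The sign condition at $t_i$ says $\gamma$ passes from the negative side to the positive side of the local sheet it meets at $\gamma(t_i)$.

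Next, the combinatorial/topological core. Consider the points $\gamma(s_i)$ for $s_i\in(t_i,t_{i+1})$ chosen in the open intervals between consecutive crossings, for $i=0,\dots,\ell+1$ (with $s_0<t_1$ and $s_{\ell+1}>t_{\ell+1}$); each $\gamma(s_i)$ lies in some component $\Omega_{k_i}$. I claim the $\ell+2$ components $\Omega_{k_0},\dots,\Omega_{k_{\ell+1}}$ cannot all be distinct in a way that is consistent — more precisely, I will produce, for each consecutive pair, a path in $M$ from a basepoint that realizes a nontrivial or trivial element, and use a pigeonhole on $\pi_1(M)$. The cleanest route: because $\gamma$ always crosses in the $+N$ direction at $t_1,\dots,t_{\ell+1}$, I can construct a loop $\beta$ as follows. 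Take $\gamma|_{[s_0,s_{\ell+1}]}$; near each crossing $t_i$ ($1\le i\le \ell+1$) it goes from the $-N$ side to the $+N$ side of a sheet. Use the normal tubular neighborhood to push $\gamma$ slightly, staying in $M\setminus\Sigma$ except for controlled transversal pokes, and close up: the homotopy classes of the closed-up segments $\gamma|_{[s_{i-1},s_i]}$ (closed up through $\Sigma$'s tubular neighborhood) give $\ell+1$ elements of $\pi_1(M)$ based appropriately; since $|\pi_1(M)|=\ell$, two of the $\ell+1$ "partial loops'' $\delta_i$ ($i=1,\dots,\ell+1$) formed by concatenating consecutive segments must represent the same element, hence their difference is a loop $\eta$ that is null-homotopic, yet by construction $\eta$ crosses $\Sigma$ transversally an odd number of times — contradicting that a null-homotopic loop has zero mod-$2$ intersection with the (two-sided, hence $\mathbb{Z}/2$-cycle) hypersurface $\Sigma$.

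The main obstacle is making the "close up through the tubular neighborhood" construction precise so that the resulting loops have well-defined and correctly computed mod-$2$ intersection numbers with $\Sigma$, and ensuring the pigeonhole genuinely forces an odd intersection count. Concretely: I must fix a basepoint $x_0\in M\setminus\Sigma$, connect $x_0$ to each $\gamma(s_i)$ by auxiliary paths, and track the parity of $\Sigma$-crossings of each resulting loop; the point is that consecutive $\gamma(s_i), \gamma(s_{i+1})$ are separated by exactly one crossing ($t_{i+1}$) in the $+N$ direction, so the "transition" loops each meet $\Sigma$ with parity $1$ relative to the chosen reference paths, and the same-$\pi_1$-element pigeonhole among $\ell+1$ of them (living in a group of order $\ell$) produces a contractible loop with parity $1$. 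Since a two-sided properly embedded hypersurface defines a class in $H_{n-1}(M;\mathbb{Z}/2)$ with respect to which contractible — indeed null-homologous — loops have zero intersection number, this is the desired contradiction. I would isolate the tubular-neighborhood bookkeeping as the one genuinely delicate lemma and keep the rest at the level of the above sketch.
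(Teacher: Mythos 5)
Your final step does not produce a contradiction, and the gap is structural rather than cosmetic. You close up the subarcs $\gamma|_{[s_{i-1},s_i]}$ into loops $\delta_i$, each meeting $\Sigma$ with mod-$2$ parity $1$, pigeonhole to find two representing the same class, and claim their null-homotopic ``difference'' has odd intersection parity. But if $\delta_i$ and $\delta_j$ each have parity $1$, then $\delta_i\delta_j^{-1}$ has parity $1+1=0$: the null-homotopic loop you build meets $\Sigma$ an \emph{even} number of times, exactly as homotopy invariance of the mod-$2$ intersection number predicts, so nothing is contradicted. (Pigeonholing the partial products $\delta_1\cdots\delta_j$ instead does not help: the resulting null-homotopic loop crosses $\Sigma$ a number of times congruent to $k-j$ plus the uncontrolled parities of the two reference paths.) The deeper symptom is that your concluding argument is sign-blind: mod-$2$ intersection theory cannot distinguish a path that always crosses in the $+N$ direction from one that alternates, yet the proposition is false without the sign hypothesis (a path in $S^3$ may cross an embedded sphere arbitrarily many times). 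A correct proof must convert ``same sign at two suitably related crossings'' into a contradiction, and that requires a separation statement, not a parity statement.

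The paper supplies exactly this missing ingredient by passing to the universal cover $\tilde M$: since the cover has order $\ell$ and $\Sigma$ is connected, the lift $\tilde\Sigma$ has at most $\ell$ components, so among the first $\ell+1$ crossings two meet the same component $\tilde\Sigma_0$. Concatenating the subarc of $\tilde\gamma$ between the first two such crossings with a normal push-off of a path inside $\tilde\Sigma_0$ yields a closed curve meeting $\tilde\Sigma_0$ exactly once, which is impossible because the complete, properly embedded $\tilde\Sigma_0$ has even intersection number with every loop in the simply connected $\tilde M$. In other words, the correct pigeonhole is on the components of $\tilde\Sigma$ (equivalently, on cosets of the image of $\pi_1(\Sigma)$ in $\pi_1(M)$), not on homotopy classes of closed-up subarcs of $\gamma$, and the equal-sign hypothesis is finally used through separation in $\tilde M$. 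The tubular-neighborhood bookkeeping you flag as the delicate point is indeed needed there (to push the path in $\tilde\Sigma_0$ off the surface), but it is routine; the genuinely delicate point is the one your sketch elides.
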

\begin{proof}

Let $\tilde M$ be the universal cover of $M$ and let $\tilde \Sigma$ be the lift
of $\Sigma$ in the sense that, for any evenly
covered open set $U \subset M$ and any sheet $\tilde U \subset \tilde M$
over $U$, $\tilde \Sigma \cap \tilde U$ is mapped isometrically to $\Sigma \cap
U$ by the covering map. Note that we can lift a unit normal vector field
$N$ of $\Sigma$ to a unit normal vector field $\tilde N$ of $\tilde \Sigma$.

Let $\gamma:[0,1] \to M$ be a differentiable path with ends away from $\Sigma$
which is transversal to $\Sigma$ and intersects $\Sigma$ it in at least $\ell +1$
points. Choose any lift $\tilde \gamma$ of $\gamma$. Note that
$\gamma^{-1}(\Sigma) = \tilde \gamma^{-1}(\tilde \Sigma)$ and for each $t_j$ 
in this set
\[g(\gamma'(t_j), N(\gamma(t_j))) =
\tilde g(\tilde \gamma'(t_j), \tilde N(\tilde \gamma(t_j))).\]
So we only need to show that there are $j, j' \le \ell +1$ such that
\[g(\tilde\gamma'(t_j), \tilde N(\gamma(t_j))) \text{ and }
\tilde g(\tilde \gamma'(t_{j'}), \tilde N(\tilde \gamma(t_{j'})))\]
have opposite sings.

Since the order of the cover $\tilde M \to M$ is $\ell$, $\tilde \Sigma$ has at most
$\ell$ connected components. So there must be a connected component $\tilde
\Sigma_0$ of $\tilde \Sigma$ intersecting $\tilde \gamma$ at least twice.
Let $\tilde \gamma(s_1) = \tilde x, \tilde \gamma(s_2) = \tilde y \in \tilde
\Sigma_0$ be the first and second points of intersection of  $\tilde \gamma$
with $\tilde \Sigma_0$ respectively. Assume by contradiction that 
the product of the velocity of $\tilde \gamma$ with the normal of
$\tilde \Sigma_0$ has the same sign at both these points. By changing the
direction of the normal of $\tilde
\Sigma_0$ we can assume that these are both negative. Choose a path $\eta: [0,1]
\to \tilde \Sigma_0$ between $\tilde x$ and $\tilde y$ and for $r \ge 0$ set
\[\eta_r(t) = \exp_{\tilde M}(r\tilde N(\eta(t))).\]
Fix $\rho > 0$ small enough so that $\eta_\rho$ does not intersect $\tilde
\Sigma_0$.
Taking $\epsilon >0$ small enough, we can join $\gamma|_{[s_1 - \epsilon,
s_2 - \epsilon]}$ to $\eta_\rho$ by two smooth curves that do not intersect
$\tilde \Sigma_0$. Let $\beta$ be the concatenation of these 4 curves. Then
$\beta$ is a smooth by parts and intersects $\tilde \Sigma_0$ exactly at 
$\tilde x$. On the other hand, since $\tilde M$ is simply-connected and 
$\tilde \Sigma_0$ is complete and proper, the intersection number of $\beta$ and $\Sigma_0$
has to be even.  
\end{proof}

\subsection{CMC surfaces in manifolds with finite fundamental group.}
We finish this section with the proof of the lemma below.
\begin{lemma}\label{lemma: local picture away from blow-up}
Let $(M,g)$ be a 3-manifold (possibly with boundary) with absolute sectional
curvature bounds $|K| \le \Lambda < \infty$ and let $\Sigma \subset M$ be 
a properly embedded CMC surface. Fix any $\alpha \in (0,1)$ and let $\Omega \subset M$
be an open set such that there are $\delta, i >0$ with the property that
$\Omega(\delta)$ as defined in Theorem \ref{thm: harmonic coords} does not
intersect $\partial M \cup \partial \Sigma$ and $\text{inj}_{(M,g)}(p) \ge i$
for all $p \in \Omega(\delta)$. Suppose the second fundamental form of $\Sigma$
in $\Omega$ is bounded above by a constant $C < \infty$. Then for all 
$p \in \Omega$ there is a
$\epsilon >0$ depending only on $\alpha, \delta, i, C$ and $\Lambda$ such 
that $\Sigma \cap B_M(p,\epsilon)$ is parametrized by
graphs of functions with $C^{2,\alpha}$ bounds depending only on 
$\alpha, \delta, i, C$ and $\Lambda$.

If we assume additionally that $\pi_1(M)$ is finite and
that $\Sigma$ is complete and has mean curvature mean
curvature $H \ge \eta > 0$, then there is a $N < \infty$ depending only on 
$\alpha, \delta, i, C, \Lambda, \eta$ and $|\pi_1(M)|$ such that 
$\Sigma \cap B_M(p,\epsilon)$ is parametrized by no more than $N$ graphs.
\end{lemma}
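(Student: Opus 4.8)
The plan is to establish the two assertions in sequence, obtaining the single-graph parametrization first and then bootstrapping to the bound on the number of sheets via the alternating principle of Proposition \ref{prop: alternating surfaces}.

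For the first part, I would fix $p \in \Omega$ and apply Theorem \ref{thm: harmonic coords} with the given $\alpha, \delta, i$ to get a harmonic chart $(U, \varphi, B_M(p, r_0))$ with $r_0, Q_0$ depending only on $\alpha, \delta, i, \Lambda$, in which $\varphi^* g$ satisfies the quadratic-form bound \eqref{eq: quadatic form bound} and the $C^{1,\alpha}$ bound \eqref{eq: holder bound on tensor}. Pulling $\Sigma$ back to $U \subset \R^3$, the hypothesis $|A| \le C$ on $\Omega$ together with the controlled metric gives a bound on the second fundamental form of $\varphi^{-1}(\Sigma \cap B_M(p,r_0))$ computed with respect to both $\varphi^* g$ and the Euclidean metric (the two differ by factors controlled by $Q_0$ and its first derivatives). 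A standard argument — which I would phrase carefully in the appendix referenced as \ref{app: uniform graph} — then shows that a piece of a hypersurface through a point with a uniform bound on $|A|$ and ambient metric $C^{1,\alpha}$-close to Euclidean is, on a ball of definite radius $\epsilon$ depending only on these constants, a union of graphs over the tangent plane; the CMC equation is a quasilinear elliptic PDE with coefficients controlled in $C^{1,\alpha}$, so elliptic Schauder estimates upgrade the a priori $C^{1,\beta}$ graph bounds to $C^{2,\alpha}$ bounds depending only on $\alpha, \delta, i, C, \Lambda$. This yields $\epsilon$ and the graph bounds as claimed.

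For the second part, assume $\pi_1(M)$ is finite with $|\pi_1(M)| = \ell$, that $\Sigma$ is complete, and that $H \ge \eta > 0$. The key point is that each graph sheet of $\Sigma \cap B_M(p,\epsilon)$ is an $H$-graph with $H \ge \eta$, so — orienting the normal by the mean curvature vector — the normal $N$ points consistently to the \emph{concave} side of each sheet, and the uniform $C^{2,\alpha}$ bounds force consecutive sheets to be separated by at least some definite distance $d_0 = d_0(\alpha, \delta, i, C, \Lambda, \eta)$: two CMC graphs with mean curvature vectors pointing the same way cannot be arbitrarily close without violating the maximum principle, while the local picture (all sheets graphical over a common plane with bounded $C^2$ norm) rules out the sheets being too close with mean curvature vectors pointing oppositely unless they are genuinely transverse in a controlled way. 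Now take a short segment $\gamma$ crossing all sheets transversally. By Proposition \ref{prop: alternating surfaces}, among every $\ell+1$ consecutive crossings the sign of $g(\gamma', N)$ must flip at least once; equivalently, within any $\ell+1$ consecutive sheets the mean curvature vector reverses direction relative to $\gamma$. But two sheets on which the mean curvature vectors point \emph{towards} each other bound a region of controlled width, and the CMC comparison principle (comparing with a sphere of radius $1/\eta$, or iterating the graph estimates) shows such a pair cannot both lie in $B_M(p,\epsilon)$ once $\epsilon$ is small relative to $1/\eta$; so each sign-flip consumes a definite amount of the ambient $\epsilon$-ball. Combining: the number of sheets is at most $\ell+1$ times the number of sign-flips, and the number of sign-flips is bounded by a constant depending only on $\epsilon/d_0$ — hence by a constant $N$ depending only on $\alpha, \delta, i, C, \Lambda, \eta$ and $\ell = |\pi_1(M)|$.

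I expect the main obstacle to be making the geometric dichotomy in the second part fully rigorous: precisely, showing that two CMC sheets whose mean curvature vectors are "linked" (pointing toward the region between them) cannot coexist in a ball of radius $\epsilon$ once $\epsilon \lesssim 1/\eta$, with the threshold depending only on the allowed constants. This is where I would invoke a mean-curvature comparison with a round sphere of radius $1/H \le 1/\eta$: a component of $\Sigma$ between two such sheets, being CMC with the sphere's mean curvature and trapped in a thin slab, is forced to close up or exit the ball, contradicting properness within $B_M(p,\epsilon)$ for small $\epsilon$. The careful bookkeeping of how this interacts with the graph representation — ensuring all constants remain in the allowed dependency list and do not secretly depend on $\Sigma$ — is the delicate part, and I would relegate the quantitative details to Appendix \ref{app: alternating CMC graphs}.
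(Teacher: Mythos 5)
Your first paragraph reproduces the paper's argument for the graphical parametrization (the Uniform Graph Lemma of Appendix \ref{app: uniform graph}): harmonic coordinates via Theorem \ref{thm: harmonic coords}, Euclidean second fundamental form bounds, and Schauder estimates from the mean curvature equation to upgrade to $C^{2,\alpha}$. The one point you gloss over is why a \emph{single} rotation makes \emph{all} components through $B(\epsilon)$ graphical over a common plane --- distinct components could a priori have very different tangent planes, and the paper rules this out by showing their Gauss maps are close since otherwise the components would cross --- but you defer this to the appendix and it is fine.

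The second part has the right skeleton (finite $\pi_1$ forces the sheets to be $\ell$-alternating by Proposition \ref{prop: alternating surfaces}, and a quantitative separation of oppositely oriented CMC graphs then caps the number of sheets), but both quantitative claims you rely on are false as stated. First, consecutive sheets are \emph{not} automatically separated by a definite $d_0$: the maximum principle only prevents two same-oriented CMC graphs from touching, not from being arbitrarily close, and same-oriented sheets can in fact accumulate --- this is precisely the phenomenon the lemma is designed to exclude and the reason the alternating property is needed at all. If your $d_0$ existed you would get $N \le \epsilon/d_0$ with no use of $\pi_1(M)$ whatsoever. Second, two sheets whose mean curvature vectors point towards each other \emph{can} coexist in $B_M(p,\epsilon)$ for every $\epsilon$: take graphical pieces of two disjoint spheres of radius $1/\eta$ curving away from each other, with vertical gap of order $\eta\epsilon^2$; no comparison with a sphere of radius $1/\eta$ excludes this. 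The correct statement, which is what the paper proves in Proposition \ref{prop: unif L-infy bound}, is that if $u \le v$ are CMC graphs with opposite-sign mean curvatures of magnitude at least $\eta$ and uniform $C^{2,\alpha}$ bounds, then $\|u-v\|_{C^0}$ is bounded below by some $c>0$ depending on the allowed constants: otherwise interpolation makes them $C^2$-close while the mean curvature operator is Lipschitz in $C^2$ and their mean curvatures differ by $2\eta$. Even granting this, one more step is needed, because $c$ bounds the supremum of the gap over the ball rather than the gap at a fixed point: the pigeonhole count (total height of order $C\epsilon$ shared among $n$ sheets) must be combined with a Harnack inequality for the differences $u_{k+2}-u_k$ to propagate a small gap somewhere to a small gap on a ball of definite radius, which is the content of Proposition \ref{prop: finite multiplicity}. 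Your "each sign-flip consumes a definite amount of the ball" is the right intuition, but the mechanism is elliptic estimates on the graph functions, not a sphere comparison, and the missing Harnack step is essential to make the bookkeeping close.
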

\begin{proof}
The first part of the statement is equivalent to the Uniform Graph Lemma
presented in Appendix \ref{app: uniform graph}. Now for the second part of the
statement, note that by Proposition \ref{prop: alternating surfaces} the graphs that
parametrize $\Sigma$ must alternate. Since these graphs must have alternating
mean curvature (in the sense of Appendix \ref{app: alternating CMC graphs}) it
is possible to bound the number $N$ of graphs uniformly using the mean curvature
equation, as shown in Proposition \ref{prop: finite multiplicity}.  
\end{proof}

\section{Proof of the main theorems}
\label{sec: pf of main theorems}

Now we are are ready to prove theorems \ref{thm: topology bound} and \ref{thm:
area bound}. Since the proofs are very similar we present the proof of Theorem
\ref{thm: topology bound} in more detail and reference it in the proof of
Theorem \ref{thm: area bound}.

\subsection{Proof of Theorem \ref{thm: topology bound}}
Let $(M,g)$ be a closed Riemannian 3-manifold and let $\{\Sigma_n\}$ be
sequence of closed embedded CMC surfaces in $M$ with index at most $I$
and area at most $A_0$. Suppose by contradiction that the genus of the 
$\Sigma_n$ form a divergent sequence. Let $H_n$ be the mean curvature of 
$\Sigma_n$. We will consider two cases depending on the behavior of the sequence
$\{H_n\}$.

\vspace{5pt}
\textbf{Case 1:} $\liminf_{n \to \infty} H_n < \infty$.\\
Pass to a subsequence so the $H_n$ are uniformly bounded in $n$ and divide the 
$\Sigma_n$ into three parts:
$\Sigma_n', \Sigma_n''$ as in Lemma \ref{lemma: picture near blow-up sets} and
\[\Sigma_n^b = \Sigma_n \setminus B_M(\B_\infty,\delta/2).\]

By Theorem \ref{thm: curvature bounds}, the norm of the second fundamental form of the 
$\Sigma_n$ is uniformly bounded away from $B_M(\B_\infty, \delta/3)$.
Since the area of the $\Sigma_n$ is uniformly bounded above the number of graphs in
the local parametrization of Lemma \ref{lemma: local picture away from
blow-up} is uniformly bounded. So
we can pass to a subsequence such that the $\Sigma_n^b$ converge
with finite multiplicity to a CMC surface $\Sigma_\infty^b$ (see 
\cite{Meeks-Ros-Rosemberg-Global-Theory-of-min-surf}*{Theorem 4.37}). Hence for
$n$ large enough $\Sigma_n^b$ is a cover of $\Sigma_\infty^b$ with uniformly
bounded degree, so there are uniform bounds on the Betti numbers of the 
$\Sigma_n^b$.

Lemma \ref{lemma: picture near blow-up sets} together with the area bound limits the
Betti numbers of $\Sigma_n'$ and $\Sigma_n''$. By
\cite{Davi-Minimal-hypersurfaces-with-bdd-index}*{Lemma 3.1}
the $\Sigma_n^b$ intersects $\Sigma_n'$ and $\Sigma_n''$ on annuli, so we can
use a Mayer-Vietoris sequence to bound on the Betti numbers of 
the $\Sigma_n$
\footnote{
We can also use a surgery argument
\cite{Davi-Minimal-hypersurfaces-with-bdd-index}*{Proposition 5.1}
}.
This contradicts the assumption that the sequence of the genus of the $\Sigma_n$ diverges.

\vspace{5pt}
\textbf{Case 2:} $H_n \to \infty$.\\
By Corollary \ref{cor: diam bound} we can conclude that a subsequence
of the $\Sigma_n$ converges to a point $p_\infty \in M$ in the Hausdorff
sense. So for $n$ large enough 
\[\Sigma_n \subset B_M(p_\infty, r).\]
where $r = inj_{(M,g)}(p_\infty)/2$.
Using the exponential map of $M$ at $p_\infty$ we can pull $\Sigma_n$
back to a properly embedded $H_n$-surface
\[\tilde \Sigma_n \subset (B_{\R^3}(p_\infty, r),
\exp_{p_\infty}^*g).\]
Scaling the $\tilde \Sigma_n$ by $H_n$ we obtain a sequence of $1$-surfaces
\[\overline \Sigma_n = H_n \tilde\Sigma_n\]
with uniformly bounded diameter in an increasing sequence of balls with metrics 
converging smoothly to the euclidean metric in compact sets.

Consider a decomposition of the $\overline \Sigma_n$ into 
$\overline\Sigma_n', \overline\Sigma_n''$ and $\overline\Sigma_n^b$ analogous to the 
decomposition of the $\Sigma_n$ in Case 1. By Lemma \ref{lemma: local picture away
from blow-up} there are uniform bounds on the number of sheets of the
$\overline\Sigma_n^b$ on any small open set. Hence we can conclude that, passing
to a subsequence, the $\overline\Sigma_n^b$ converge to a 1-surface
$\Sigma_\infty^b$. These same arguments can be used to bound the number of
connected components of the $\overline \Sigma_n''$. Finally we can conclude 
using the same arguments from Case 1. \qed

\subsection{Proof of Theorem \ref{thm: area bound}}\label{subsec: pf of thm 1.2}
First note that by Theorem \ref{thm: topology bound} we
only have to show that there is an uniform area bound.
Let $(M,g)$ be a spherical Riemannian 3-manifold and let $\{\Sigma_n\}$ be a
sequence of closed embedded CMC surfaces in $M$ with index at most $I<\infty$
and mean curvature at least $\eta > 0$. Assume by contradiction that 
the area of the $\Sigma_n$ form a divergent sequence.

Let $H_n$ be the mean curvature of $\Sigma_n$. In the case where
$\limsup_n H_n \to \infty$ we can pass to a subsequence so $H_n \to \infty$.
Following the arguments used in Case 2 of the proof of Theorem 
\ref{thm: topology bound} we can conclude that the area of the mean-curvature
rescaled $\Sigma_n$ is uniformly bounded, hence the area of the $\Sigma_n$ must
converge to 0, a contradiction.

So we may assume $\limsup_{n \to \infty} H_n < \infty$. In this case we can apply the
same decomposition used in Case 1 on the proof of Theorem \ref{thm: topology
bound} and use Lemma \ref{lemma: local picture away from blow-up} to bound the
number of sheets of any $\Sigma_n^b$ that can pass through small regions. A
similar argument can be used to bound the number of connected components the 
$\Sigma_n''$ can have, and hence we can conclude that there are uniform area bounds for
these two parts. Finally, Lemma \ref{lemma: picture near blow-up sets} allows us
to conclude that there are uniform area bounds for the $\Sigma_n'$. \qed
\appendix

\section{Uniform Graph Lemma}\label{app: uniform graph}

The following result is a version of the Uniform Graph Lemma (cf
\cite{Meeks-Ros-Rosemberg-Global-Theory-of-min-surf}*{Lemma
4.35}) which allows for $C^{2,\alpha}$ bounds for functions parametrizing CMC
surfaces depending only on the second fundamental form of the surface and
the ambient space sectional curvature. This lemma is slightly stronger than what we
need in order to show our main theorems, however it is included here for the
sake of completion.

\begin{lemma}\label{lemma: Uniform graph}
Suppose $(M,g)$ is a 3-manifold with absolute sectional curvature bounds 
$|K| \le \Lambda < \infty$ and let $\Sigma \subset M$ be a properly embedded 
CMC surface. Let $\Omega \subset M$ be an open set lying away from the boundary
of $M$ and suppose the second fundamental form of $\Sigma$ in $\Omega$ is
bounded above by a constant $C < \infty$.
Fix any $\alpha \in (0,1)$ and
suppose $\delta, i,r_0$ and $Q_0$ are as in Theorem \ref{thm: harmonic coords}.
Fix an $r \in (0, r_0)$ and let $x \in \Omega$ be such that 
$d_M(x, \partial \Sigma) > r.$

Choose a harmonic chart $(U, \varphi, B_M(x,r))$ as in
Theorem \ref{thm: harmonic coords}.
For any $\epsilon \in (0,r)$ let $\Sigma'$ be the set of connected
components of $\varphi^{-1}(\Sigma \cap B_M(x, r))$ intersecting 
the euclidean ball $B_{\R^3}(0,\epsilon)$. Then there are
$\epsilon> 0 $, $\rho \in (\epsilon, r)$ and $C' < \infty$ depending only on
$\Lambda, C, i$ and $\alpha$, and a rotation $R \in O(\R^3)$
such that:
\begin{enumerate}
\item Every connected component of $R(\Sigma') \cap B(\rho) \times \R$
is the graph of a function $u$ over $B(\rho)$.
\item  For all such functions $u$ we have 
\[\|u\|_{C^{2,\alpha} (B(\rho))} \le C'.\]
\end{enumerate}
\end{lemma}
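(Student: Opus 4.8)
The plan is to prove the Uniform Graph Lemma (Lemma \ref{lemma: Uniform graph}) by working entirely in the fixed harmonic chart $(U,\varphi,B_M(x,r))$, where by Theorem \ref{thm: harmonic coords} the pulled-back metric $\varphi^*g$ satisfies the $C^{1,\alpha}$ bounds \eqref{eq: quadatic form bound}--\eqref{eq: holder bound on tensor} with constants depending only on $\Lambda, i, \alpha$. First I would observe that the hypothesis $|A_\Sigma| \le C$ on $\Omega$, combined with the metric equivalence \eqref{eq: quadatic form bound}, gives a bound $|\widehat A| \le C_1$ on the second fundamental form of $\widehat\Sigma := \varphi^{-1}(\Sigma\cap B_M(x,r))$ computed with respect to the \emph{Euclidean} metric $\delta_{ij}$ on $U$, with $C_1 = C_1(\Lambda,C,i,\alpha)$; this uses that changing the metric within a $C^{1,\alpha}$-bounded family changes the second fundamental form in a controlled way (the Christoffel symbols of $\varphi^*g$ are $C^{0,\alpha}$-bounded). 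At the origin, pick $R\in O(\R^3)$ so that $R(T_0\widehat\Sigma)$ is the horizontal plane $\R^2\times\{0\}$; after relabeling, assume $R$ is the identity.

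Next I would run the standard Euclidean graph argument with explicit control. A properly embedded surface through the origin with Euclidean second fundamental form bounded by $C_1$ and horizontal tangent plane at $0$ is, on a ball $B_{\R^3}(0,\rho_0)$ with $\rho_0 = \rho_0(C_1)$, a union of graphs over $B(\rho_0)\subset\R^2$ of functions $u$ with $u(0)=0$, $\nabla u(0)=0$, and a $C^1$ bound $\|u\|_{C^1(B(\rho_0))}\le C_2(C_1)$ — this is exactly the content of the classical Uniform Graph Lemma \cite{Meeks-Ros-Rosemberg-Global-Theory-of-min-surf}*{Lemma 4.35}, but here the curvature bound is already the Euclidean one so nothing depends on $M$ beyond the chart constants. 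Shrinking to $\rho=\rho_0/2$ and choosing $\epsilon$ smaller than $\rho$ (so that any component of $\widehat\Sigma$ meeting $B_{\R^3}(0,\epsilon)$ actually extends across $B(\rho)\times\R$) takes care of conclusion (1).

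For the $C^{2,\alpha}$ bound in conclusion (2), I would appeal to elliptic regularity for the CMC equation. Each graph $u$ over $B(\rho)$ satisfies the prescribed-mean-curvature PDE for the metric $\varphi^*g$, which in divergence form is a quasilinear elliptic equation whose coefficients are built from $(\varphi^*g)_{ij}$, their first derivatives, and $\nabla u$; by \eqref{eq: quadatic form bound}--\eqref{eq: holder bound on tensor} and the $C^1$ bound on $u$ just obtained, this equation is uniformly elliptic with $C^{0,\alpha}$ coefficients bounded in terms of $\Lambda, C, i, \alpha$ and the (structurally bounded) mean curvature $H$. Schauder estimates for quasilinear equations (e.g. Gilbarg--Trudinger) then upgrade $\|u\|_{C^1}$ to $\|u\|_{C^{2,\alpha}(B(\rho/2))}\le C'$ with $C'$ depending only on the stated data; relabel $\rho/2$ as the final $\rho$. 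The main obstacle is the bookkeeping in converting the intrinsic curvature bound into a Euclidean one in the chart and then verifying that the resulting PDE coefficients are genuinely controlled only by $\Lambda, C, i, \alpha$ — one must be careful that the mean curvature $H$ of $\Sigma$, though a priori unbounded, enters only through the inhomogeneous term in a way that is itself controlled by $|A|\le C$ via $H \le |A|$, so that no hidden dependence on $H$ creeps into $C'$.
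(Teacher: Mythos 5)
Your outline matches the paper's in its broad strokes (Euclidean second fundamental form bound in the harmonic chart, graphical decomposition, Schauder estimates via the mean curvature equation), but it skips the one step that is the actual content of the lemma: why \emph{all} the components of $\Sigma'$ are simultaneously graphs over a \emph{single} plane after one rotation $R$. You choose $R$ using the tangent plane of $\widehat\Sigma$ at the origin (note, incidentally, that $0=\varphi^{-1}(x)$ need not lie on $\widehat\Sigma$ at all --- the hypothesis is only that components of $\varphi^{-1}(\Sigma\cap B_M(x,r))$ meet $B_{\R^3}(0,\epsilon)$), and then assert that the surface is a union of graphs over $B(\rho_0)$ ``with $u(0)=0$, $\nabla u(0)=0$,'' citing the classical Uniform Graph Lemma. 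That classical lemma only says that \emph{each} component is, near each of its points, a graph over \emph{its own} tangent plane; it says nothing about distinct sheets sharing a common graphing direction, and your phrasing (all graphs vanishing to first order at $0$) shows you are implicitly treating the multi-sheeted set $\Sigma'$ as a single sheet. A priori a second component could pass through $B_{\R^3}(0,\epsilon)$ with a nearly vertical tangent plane, and then it is not a graph over $B(\rho)$ for your choice of $R$.

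The paper closes exactly this gap with an embeddedness argument: it shows that for $\epsilon$ small (depending on the Euclidean curvature bound $K$ and $r$) the unoriented Gauss map of $\Sigma'\cap B_{\R^3}(0,\epsilon)$ has oscillation less than $\pi/4$, because two disjoint properly embedded sheets with bounded curvature whose tangent planes at nearby points are far apart would be forced to cross. Only after this does a single rotation $R$ make every component graphical over the common disk $B(\rho)$. You need to supply this (or an equivalent) argument; without it conclusion (1) is unproved. A secondary, more minor point: passing from your $C^1$ bound directly to $C^{2,\alpha}$ by ``Schauder for quasilinear equations'' requires an intermediate $C^{1,\alpha}$ (gradient H\"older) estimate before linear Schauder applies, since the coefficients $a^{ij}(x,u,\nabla u)$ are only continuous when $\nabla u$ is merely bounded; alternatively, do as the paper does and extract a genuine $C^2$ bound from the second fundamental form bound together with the gradient bound, after which the coefficients are Lipschitz and interior Schauder applies at once.
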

\begin{proof}
From \eqref{eq: quadatic form bound}
and \eqref{eq: holder bound on tensor}
we can conclude that there is a $K < \infty$ depending only on $Q_0$ and $C$ 
such that the norm of the euclidean second fundamental form of 
$\varphi^{-1}(\Sigma \cap B_M(x, r))$ is at most $K$. For any 
$\epsilon >0$ let $\Sigma'$ be as above and let
$G': \Sigma' \to \mathbb{RP}^2$ be the euclidean unoriented Gauss map of $\Sigma'$.

We first claim that there is an $\epsilon >0$ depending only on $K$ and $r$ such
that
\[\sup_{p, q \in \Sigma'\cap B_{\R^3}(0, \epsilon)}
d_{\mathbb{P}^2}(G'(p),G'(q)) < \frac{\pi}{4}.\]
For $p$ and $q$ in the same connected component, the existence of $\epsilon$ 
follows immediately from the bounds on the second fundamental form. 
For $p, q$ in distinct connected components 
we can use the bound on the second form to show that if $\epsilon$ is
sufficiently small and $G'(p)$ and $G'(q)$ 
are too far apart then the connect components on $\Sigma'$ containing $p$ and $q$
must cross.

Choose a rotation $R$ so that the tangent planes to 
$R(\Sigma') \cap B_{\R^3}(0,\epsilon)$ are bounded away from any vertical plane 
by at least $\pi/6$. It follows that there is a constant $\tilde \rho$
depending only on $K$ such
that, for all $p = (p_1,p_2,p_3) \in R(\Sigma) \cap B_{\R^3}(0,\epsilon)$ 
the portion of $R(\Sigma')$ in the cylinder $B((p_1,p_2),\tilde \rho) \times \R$ is 
the graph of a function over $B(p,\tilde \rho)$.
The bounds on the euclidean second form and on the distance from the tangent
planes to vertical planes give bounds on the $C^2$ norms for 
these functions.

By possibly making $\epsilon$ smaller, we can assume that $\epsilon < \tilde
\rho /2$. So taking $\rho = \tilde \rho/2$, we have that every connected
component of $R(\Sigma') \cap B(\rho) \times \R$ is a graph over $B(\rho)$ of a
function with $C^2$-control. Making $\rho$ smaller and
applying interior Schauder estimates 
\cite{Gilbarg-Trudinger-Elliptic-PDES}*{Theorem 6.2} using the mean curvature
equation we can obtain $C^{2,\alpha}$ bounds for these functions.  
\end{proof}

\section{Alternating CMC graphs}\label{app: alternating CMC graphs}

Here we will use a local version of the alternating property defined in 
\ref{subsec: orentable surf in spaces} to bound the number of sheets any CMC 
hypersurface can have in a small region.

Let $B(r) \subset \R^d$ be an open ball of radius $r$ and suppose $g$ is a 
metric in $B(r) \times \R$ such that the $g_{ij}$ have
$C^{1,\alpha}$-norm bounded by a constant $Q_0$ for some fixed
$\alpha \in (0,1)$. Assume further that, as quadratic forms
\[Q_0^{-1} \delta_{ij} \le g_{ij} \le Q_0 \delta_{ij}.\]
For any $u \in C^2(B(r))$ we will choose the normal $N_u$ to the
graph of $u$ pointing upwards, that is $g(N_u, \partial_d) > 0$. For 
$x \in B(r)$ let $h_u(x)$ 
be the scalar mean curvature of the graph of $u$ at $(x,u(x))$ with respect to
the metric $g$ and the normal $N_u$.
We will refer to $h_u$ as the mean curvature of $u$, and if $h_u$ is constant we
will say that $u$ has constant mean curvature.

\begin{proposition}\label{prop: unif L-infy bound}
Suppose $u, v \in C^{2, \alpha}(B(r))$ have constant mean curvature with 
opposite signs and
\begin{enumerate}
\item $u \le v$ point-wise;
\item  $|h_u| = |h_v| \ge \eta$ for some $\eta > 0$;
\item $\|u\|_{C^{2,\alpha}(B(r))}, \|v\|_{C^{2,\alpha}(B(r))} \le C$ for some $C > 0$.
\end{enumerate}
Then there is a $c > 0$ depending only on $Q_0, \alpha, r, \eta$ and $C$ such that 
\[\|u-v\|_{C^0(B(r))} > c\]
\end{proposition}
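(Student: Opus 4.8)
The plan is to argue by contradiction and compactness, exploiting the mean curvature equation as an elliptic PDE. Suppose no such $c$ exists; then there are sequences $u_k, v_k \in C^{2,\alpha}(B(r))$ satisfying (1)--(3) with $\|u_k - v_k\|_{C^0(B(r))} \to 0$. By the uniform $C^{2,\alpha}$ bound in (3) and Arzel\`a--Ascoli, after passing to a subsequence both $u_k$ and $v_k$ converge in $C^2(B(r))$ (indeed in $C^{2,\beta}$ for $\beta < \alpha$) to a common limit $w \in C^{2,\alpha}(B(r))$, since their $C^0$ distance goes to zero. Likewise the constants $h_{u_k}$ and $h_{v_k}$, which lie in $[-\|u_k\|_{C^2}\text{-controlled bound}, \dots]$ and have absolute value $\ge \eta$, subconverge to constants $a = \lim h_{u_k}$ and $b = \lim h_{v_k}$ with $|a| = |b| \ge \eta$; by the opposite-signs hypothesis $a = -b$, so $a \ne b$. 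On the other hand, the scalar mean curvature $h_u(x)$ is given by a second-order quasilinear elliptic operator whose coefficients depend continuously on the $1$-jet of $u$ and on the metric coefficients $g_{ij}$ and their first derivatives (all controlled by $Q_0$ with the stated ellipticity bounds). Passing to the limit in the equations $h_{u_k} \equiv a_k$ and $h_{v_k} \equiv b_k$ gives that $w$ has constant mean curvature equal to both $a$ and $b$ — forcing $a = b$, a contradiction.

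Let me make the passage to the limit precise, since that is the crux. The scalar mean curvature of the graph of $u$ with respect to $g$ and the upward normal $N_u$ can be written as $h_u(x) = \mathcal{M}\bigl(x, u(x), Du(x), D^2 u(x)\bigr)$, where $\mathcal{M}$ is a smooth function of its arguments, linear in $D^2 u$, elliptic (the coefficient of $D^2 u$ is a positive-definite matrix bounded above and below in terms of $Q_0$ and the $C^2$ bound on $u$), and depending on the metric only through $g_{ij}$ and $\partial_m g_{ij}$ evaluated at $(x, u(x))$. Given $u_k \to w$ in $C^2(B(r))$ and $g$ fixed with the stated regularity, we have $\mathcal{M}(x, u_k, Du_k, D^2 u_k) \to \mathcal{M}(x, w, Dw, D^2 w)$ pointwise on $B(r)$ (uniformly on compact subsets), so the left sides converge to the constant mean curvature of $w$; but the left sides equal the constants $a_k \to a$, so $w$ has constant mean curvature $a$. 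The identical argument with $v_k$ gives constant mean curvature $b$ for the same function $w$, whence $a = b$, contradicting $|a| = |b| \ge \eta > 0$ and $\operatorname{sign} a = -\operatorname{sign} b$.

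The main obstacle I anticipate is not any single estimate but making sure the limit function $w$ is actually a legitimate graph over all of $B(r)$ with a genuine (constant) mean curvature — i.e., that nothing degenerates in the limit. This is handled by the uniform bounds: hypothesis (3) gives equicontinuity of $u_k, v_k, Du_k, Du_k, D^2 u_k, D^2 v_k$ via $C^{2,\alpha} \hookrightarrow\hookrightarrow C^2$, so $w \in C^2(B(r))$ (with $C^{2,\alpha}$ bound $\le C$ by lower semicontinuity of the Hölder seminorm), and the ellipticity constant of $\mathcal{M}$ at $w$ is controlled below by a positive constant depending only on $Q_0$ and $C$, so $w$ is a bona fide CMC graph. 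One subtlety worth spelling out in the writeup: one should check that hypothesis (1), $u_k \le v_k$, is not actually needed for this argument — it is used elsewhere (to arrange the sheets in order) but the contradiction here comes purely from $\|u_k - v_k\|_{C^0} \to 0$ together with the opposite-sign condition (2). If one prefers to avoid bare compactness, an alternative is to subtract the two mean curvature equations: $w_k := u_k - v_k$ satisfies a linear elliptic equation $L_k w_k = h_{u_k} - h_{v_k} = a_k - b_k$, where $L_k$ has coefficients bounded in $C^\alpha$ (and uniformly elliptic) by (3); since $|a_k - b_k| \ge 2\eta$ by (2), a Harnack or maximum-principle estimate $\|w_k\|_{C^0} \ge c^{-1} |L_k w_k - \text{const}|$... — but this interior estimate for an inhomogeneous equation with no zero-order control requires care, so the cleaner route is the contradiction-compactness argument above, and that is what I would write.
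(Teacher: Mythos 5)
Your argument is correct and is essentially the paper's proof in sequential-compactness packaging: the paper runs the same contradiction quantitatively, first using the uniform $C^{2,\alpha}$ bounds to interpolate ($\|u-v\|_{C^0(B(r))}$ small $\Rightarrow$ $\|u-v\|_{C^2(B(r/2))}$ small) and then using the Lipschitz estimate $2\eta \le \|h_u - h_v\|_{C^0(B(r/2))} \le C'\|u-v\|_{C^2(B(r/2))}$ from the mean curvature equation, whereas you extract $C^2$-convergent subsequences via Arzel\`a--Ascoli and pass to the limit in the equation. Your observation that hypothesis (1) is not needed here is also consistent with the paper's proof, which never uses it.
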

\begin{proof}
Since $u$ and $v$ have $C^{2,\alpha}$ norm bounded above by $C$ we can
conclude that for all $\epsilon >0 $ there is a $\delta >0$ depending only on 
$C, \alpha, \epsilon$ and $r$ such that
\[\|u-v\|_{C^2(B(r/2))} < \epsilon\]
whenever $\|u-v\|_{C^0(B(r))} < \delta$.
So if $u$ and $v$ can be made arbitrarily close in $B(r)$, their mean curvatures
can also be made to be arbitrarily close in $B(r/2)$.

On the other hand from the mean curvature equation 
(see \citelist{
\cite{Colding-Minicozzi-Minimal-surfaces}*{Chapter 5}
\cite{Colding-Minicozzi-A-course-in-min-surf}*{Chapter 7}})
we can conclude that there is a $C'< \infty$ depending only on $Q_0, \alpha, r$
and $C$ such that
\[2\eta \le \|h_u - h_v\|_{C^0(B(r/2))} \le C'\|u - v\|_{C^2(B(r/2))}\]
whenever $u$ and $v$ are $C^2$-close enough. Contradicting the fact that $u$ and
$v$ can be made arbitrarily close in the $C^2$ norm.  
\end{proof}

Assume $u_1, \cdots, u_n \in C^{2,\alpha}(B(r))$ have non-zero constant mean
curvature with the same magnitude and $u_1 \le u_2 \le \cdots \le u_n$ 
point-wise. We say that such a collection is $\ell$-\textit{alternating} for 
some $\ell \in \mathbb N$ if for 
all $j \le n - \ell$ there is a $j' \le j + \ell$ such that the mean curvature
of $u_j$ and $u_{j'}$ have opposite signs. Note that by Proposition \ref{prop:
alternating surfaces} this property always holds for graphs parametrizing a 
complete properly embedded two-sided CMC hypersurface in a manifold with fundamental group of order at most
$\ell$.

\begin{proposition}\label{prop: finite multiplicity}
Let $u_1, \cdots, u_n \in C^{2,\alpha}(B(r))$ be an $\ell$-alternating
collection and let $\eta > 0, C < \infty$ and $\epsilon \in (0,r)$ be such that 
for all $k= 1, \cdots ,n$:
\begin{enumerate}
\item $|h_{u_k}| \ge \eta$;
\item $\|u_k\|_{C^{2,\alpha}(B(r))} \le C$;
\item the graph of every $u_k$ intersects an euclidean ball
	$B(\epsilon)$.
\end{enumerate}
Then there is an $N < \infty$ depending only on $Q_0,
\alpha, \eta, r, \ell, C$ and $\epsilon$ such that $n \le N$.
\end{proposition}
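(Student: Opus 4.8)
The plan is to argue by contradiction: suppose there is a sequence of $\ell$-alternating collections $u_1^{(m)} \le \cdots \le u_{n_m}^{(m)}$ satisfying (1)--(3) with $n_m \to \infty$. Since all the $u_k^{(m)}$ have $C^{2,\alpha}$-norm bounded by $C$ and their graphs all meet the small ball $B(\epsilon)$, their $C^0$-norms on $B(r)$ are uniformly bounded: indeed, a function with $\|u\|_{C^1(B(r))} \le C$ whose graph passes within height $\epsilon$ of the origin satisfies $|u| \le C(r+\epsilon) \le 2Cr$ on $B(r)$. So for each fixed $m$, all $n_m$ graphs are squeezed into the slab $B(r) \times [-2Cr, 2Cr]$. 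The first key step is therefore a pigeonhole argument on the vertical heights $u_k^{(m)}(0)$: these lie in an interval of length $4Cr$, so for $n_m$ large there must be a long run of consecutive indices $u_j^{(m)}, u_{j+1}^{(m)}, \dots, u_{j+\ell}^{(m)}$ whose values at $0$ all lie in a subinterval of length, say, $c/2$, where $c = c(Q_0, \alpha, r, \eta, C)$ is the constant from Proposition \ref{prop: unif L-infy bound}.

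The second key step is to combine the ordering with the $\ell$-alternating hypothesis to produce, within this run, a pair $u_j^{(m)} \le u_{j'}^{(m)}$ (with $j' \le j + \ell$) whose mean curvatures have opposite signs — this is exactly what $\ell$-alternating gives us, applied at index $j$, provided $n_m - \ell \ge j$, which we can arrange by choosing the run to start early enough (or, symmetrically, by applying the definition at the top of the run if it sits near the end; one can also just demand $n_m$ large enough that a run of length $\ell+1$ exists with its base index $\le n_m - \ell$). Because $u_j^{(m)} \le u_{j+1}^{(m)} \le \cdots \le u_{j'}^{(m)}$ pointwise and all these functions agree to within $c/2$ at the single point $0$... here I need a little care: pointwise ordering plus closeness at one point does not immediately give $C^0$-closeness. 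So I would instead apply the pigeonhole directly to $\|u_k^{(m)}\|_{C^0(B(r))}$-separation: since consecutive graphs are ordered, the quantities $\max_{B(r)}(u_{k+1}^{(m)} - u_k^{(m)}) \ge 0$ are nonnegative and (summing telescopically over $k$) total at most $4Cr$; hence among any $\lceil 4Cr/c \rceil + 1$ consecutive indices there are two consecutive ones, $u_k^{(m)}$ and $u_{k+1}^{(m)}$, with $\|u_{k+1}^{(m)} - u_k^{(m)}\|_{C^0(B(r))} < c$. Iterating, among $\ell(\lceil 4Cr/c\rceil+1)$ consecutive indices one finds $u_j^{(m)} \le u_{j'}^{(m)}$ with $j' - j \le \ell$ and $\|u_{j'}^{(m)} - u_j^{(m)}\|_{C^0(B(r))} < \ell c \le $ ... no, that blows up. The cleaner route: apply Proposition \ref{prop: unif L-infy bound} to each consecutive pair $u_k^{(m)}, u_{k+1}^{(m)}$ whose curvatures have opposite signs — such a pair forces a gap of at least $c$. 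The alternating condition guarantees that sign changes occur at least once every $\ell$ steps, so among any $n_m$ indices there are at least $\lfloor (n_m - 1)/\ell \rfloor$ disjoint consecutive pairs with opposite-sign curvatures, each contributing a jump $\ge c$ to the total vertical spread. This forces $4Cr \ge \lfloor (n_m-1)/\ell \rfloor \cdot c$, i.e. $n_m \le \ell(4Cr/c + 1) + 1 =: N$, a contradiction once $n_m > N$.

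Thus the argument reduces to: (a) a uniform $C^0$ bound on all the graphs from (2) and (3); (b) extracting, from the $\ell$-alternating property, order-$n/\ell$ many disjoint consecutive opposite-sign pairs; (c) applying Proposition \ref{prop: unif L-infy bound} to each such pair to get a definite vertical gap $c$; (d) summing the gaps against the total spread $4Cr$ to bound $n$. The main obstacle — and the reason Proposition \ref{prop: unif L-infy bound} is stated and proved separately — is step (c): one must know that two ordered constant-mean-curvature graphs with curvatures of opposite sign cannot be $C^0$-close, which is where the mean curvature equation and the Schauder bounds enter. Once that quantitative separation is in hand, the rest is the counting bookkeeping sketched above, and the explicit constant is $N = \ell\lceil 4Cr/c \rceil + \ell + 1$ with $c = c(Q_0,\alpha,r,\eta,C)$ from Proposition \ref{prop: unif L-infy bound}; note $N$ genuinely does not depend on $\epsilon$ beyond its role in (a), so one can absorb $\epsilon$ into the bound or simply record the dependence as stated.
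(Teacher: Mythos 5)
Your overall architecture (reduce to finding opposite-sign ordered pairs, invoke Proposition \ref{prop: unif L-infy bound} for a quantitative separation, then count against the bounded vertical spread) is the right shape, but the counting step (d) has a genuine gap that you in fact notice mid-argument and then reintroduce. Proposition \ref{prop: unif L-infy bound} gives a lower bound on $\|u_{j'}-u_j\|_{C^0}$, i.e.\ on $\sup_{B}(u_{j'}-u_j)$, \emph{not} on $\inf_B(u_{j'}-u_j)$. Your final tally ``each pair contributes a jump $\ge c$ to the total vertical spread, hence $4Cr \ge \lfloor (n-1)/\ell\rfloor\, c$'' implicitly sums these sup-norm gaps pointwise; but the suprema of the various differences may be attained at different points of $B(r)$, and $\sum_k \sup_x f_k(x)$ is \emph{not} bounded by $\sup_x \sum_k f_k(x)$. (Your earlier ``telescoping'' claim that $\sum_k \max_{B(r)}(u_{k+1}-u_k) \le 4Cr$ is false for the same reason: telescoping is a pointwise identity.) A secondary, fixable slip: the $\ell$-alternating condition does not produce \emph{consecutive} opposite-sign pairs, only a pair $(u_j,u_{j'})$ with $j'\le j+\ell$ in each window; this only costs a factor of $\ell$.

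The missing ingredient, and the reason the paper's proof is structured as it is, is the Harnack inequality. After reducing to the strictly alternating case, the paper forms $w_{k,k+2}=u_{k+2}-u_k$: since $u_k$ and $u_{k+2}$ have the \emph{same} constant mean curvature, $w_{k,k+2}$ is a nonnegative solution of a homogeneous elliptic equation with controlled coefficients, so it satisfies a Harnack inequality (\cite{Gilbarg-Trudinger-Elliptic-PDES}*{Theorem 8.20}). Pigeonholing at a single point of $B(\epsilon)$ forces some $w_{k,k+2}$ to be $\le 6C\epsilon/n$ somewhere; Harnack upgrades this to $w_{k,k+2}\le 6C'C\epsilon/n$ on all of $B(\epsilon/2)$, hence $\|u_{k+1}-u_k\|_{C^0(B(\epsilon/2))}\le 6C'C\epsilon/n$, and only then does the sup-norm separation of Proposition \ref{prop: unif L-infy bound} (applied on $B(\epsilon/2)$ to the opposite-sign pair $u_k,u_{k+1}$) yield $n\le 6C'C\epsilon c^{-1}$. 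Without this propagation from one point to a ball, the separation lemma cannot be contradicted, and your count does not close.
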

\begin{proof}
By passing to a subset of $\{u_1, \cdots, u_n\}$ with at least 
$\lfloor n/\ell\rfloor$ elements
we can assume that the mean curvature of the $u_k$ alternate 
(i.e. $\ell = 1$). The difference $w_{k, k+2} = u_{k+2} - u_k$ follows an
elliptic equation with ellipticity and coefficients controlled by the $C^1$ norm of
the metric and the $C^1$ norm of $u_{k+2}$ and $u_k$
(see \citelist{\cite{Colding-Minicozzi-Minimal-surfaces}*{Chapter 5}
\cite{Colding-Minicozzi-A-course-in-min-surf}*{Chapter 7}}).
It follows that these functions obey a Harnack inequality with a constant
$C'$ that only depends on $Q_0$ and $C$
\cite{Gilbarg-Trudinger-Elliptic-PDES}*{Theorem 8.20}.

It follows from the $C^1$ bound on the $u_k$ that 
$|u_k| < 3C\epsilon$ in $B(\epsilon)$. Hence for at least one of the 
$w_{k, k+2}$ must have
\[\inf_{x \in B(\epsilon)}w_{k, k+2}(x) \le \frac{6C\epsilon}{n}.\]
So by the Harnack inequality $w_{k,k+2} \le 6C' \frac{C\epsilon}{n}$ in
$B(\epsilon/2)$, this implies that
\[\|u_k - u_{k+1}\|_{C^0(B(\epsilon/2))} \le 6C'\frac{C\epsilon}{n}.\]
So we must have $n < 6 \epsilon C' C c^{-1}$ where $c$ is as in Proposition 
\ref{prop: unif L-infy bound} with $r = \epsilon/2$.  

\end{proof}
\bibliography{refs}

\end{document}